\newcommand{\RR}{\mathbb R}
\newcommand{\TT}{\mathbb T}
\newcommand{\pat}{\partial_t}
\newcommand{\pax}{\partial_x}
\newcommand{\jeps}{\mathcal{J}_\epsilon*}
\newcommand{\vertiii}[1]{{\left\vert\kern-0.25ex\left\vert\kern-0.25ex\left\vert #1 
    \right\vert\kern-0.25ex\right\vert\kern-0.25ex\right\vert}}
\newcounter{comentcount}
\newcounter{teocount}
\newtheorem{lem}{Lemma}
\newtheorem{prop}{Proposition}
\newtheorem{teo}[teocount]{Theorem}  
\newtheorem{defi}{Definition}
\newtheorem{remark}{Remark}
\title{On the generalized Buckley-Leverett equation}
\author[J. Burczak]{Jan Burczak}
\email{jb@impan.pl}
\address{Institute of Mathematics of the Polish Academy of Sciences, Warsaw, 21 00-956, Poland}
\author[R. Granero-Belinch\'{o}n]{Rafael Granero-Belinch\'{o}n}
\email{rgranero@math.ucdavis.edu}
\address{Department of Mathematics, University of California, Davis, CA 95616, USA}
\author[G. K. Luli]{Garving K. Luli}
\email{kluli@math.ucdavis.edu}
\address{Department of Mathematics, University of California, Davis, CA 95616, USA}
\begin{document}

\begin{abstract}In this paper we study the generalized Buckley-Leverett equation with nonlocal regularizing terms. One of these regularizing terms is diffusive, while the other one is conservative.  We prove that if the regularizing terms have order higher than one (combined), there exists a global strong solution for arbitrarily large initial data. In the case where the regularizing terms have combined order one, we prove the global existence of solution under some size restriction for the initial data. Moreover, in the case where the conservative regularizing term vanishes, regardless of the order of the diffusion and under certain hypothesis on the initial data, we also prove the global existence of strong solution and we obtain some new entropy balances. Finally, we provide numerics suggesting that, if the order of the diffusion is $0< \alpha<1$, a finite time blow up of the solution is possible.
\end{abstract}

\maketitle 

\tableofcontents

\section{Introduction}
In this paper we study the case of the Buckley-Leverett equation with generalized regularizing terms provided by fractional powers of the laplacian $(-\Delta)^{\alpha/2}=\Lambda^\alpha$
\begin{equation}\label{BL}
\pat u+\pax \left[\frac{u^2}{u^2+M(1-u)^2}\right]=-\nu\Lambda^\alpha u-\mu\Lambda^\beta \pat u,\, x\in\Omega,\,t>0, \end{equation}
with initial data
$$
0\leq u(x,0)=u_0(x)\leq 1,
$$
and where $M>0$ is a fixed constant. Here $\Omega$ is either $\Omega=\RR$ or $\Omega=\TT$.

Let us immediately emphasize that $u_0(x)\leq 1$ is not a smallness condition, since, in applications, $u$ denotes a certain proportion (compare the following literature outline). Equation \eqref{BL} is a nonlocal regularization of the classical Buckley-Leverett equation
\begin{equation}\label{BLstandard}
\pat u+\pax \left[\frac{u^2}{u^2+M(1-u)^2}\right]=0,\, x\in\Omega,\,t>0, 
\end{equation}
The nonlinearity in equation \eqref{BL} is regularized in two different ways: firstly, due to the diffusive term 
$$
-\nu\Lambda^\alpha u,
$$
and secondly due to the conservative term
$$
-\mu\Lambda^\beta \pat u.
$$

Equation \eqref{BLstandard} was derived by Buckley \& Leverett in\cite{buckley1941mechanism} and it has been well studied since then (see LeVeque \cite{leveque1992numerical} and  Mikeli\'c \& Paoli \cite{mikelic1997derivation}). This equation is used to describe a two-phase flow in a porous medium. For example, oil and water flow in soil or rock. In this situation $u$ represents the saturation of water and $M>0$ is the water-over-oil viscosity ratio. Equation \eqref{BLstandard} is a prototype of conservation laws with convex-concave flux functions (see for instance Lax \cite{lax1957hyperbolic}, Glimm \cite{glimm1965solutions}, Hong \cite{hong2006extension}, Hong \& Temple \cite{temple2004bound}, Bayada, Martin \& Vazquez \cite{bayada2006generalized}). Under the effect of dynamic capillarity, \eqref{BLstandard} needs to be modified with two regularizing terms (see Hassanizadeh \& Gray \cite{hassanizadeh1990mechanics, hassanizadeh1993thermodynamic}):
\begin{equation}\label{BLstandard2}
\pat u+\pax \left[\frac{u^2}{u^2+M(1-u)^2}\right]=\nu \pax^2u+\nu^2 \tau \pax^2\pat u,\, x\in\Omega,\,t>0. 
\end{equation}
Equation \eqref{BLstandard2} has been studied by many authors. For instance, Van Duijn, Peletier \& Pop \cite{van2007new} derived existence conditions for solution of travelling wave form. Moreover, this leads to admissible shocks for \eqref{BLstandard}, which violate the Oleinik entropy condition. In \cite{HongWuYuan}, Hong, Wu \& Yuan proved the global existence of classical solution to \eqref{BLstandard2} with $u_0\in H^1(\RR)$. Furthermore, they proved that the solution becomes $C^\infty((0,\infty)\times \RR)$. For equation \eqref{BLstandard}, Hong, Wu \& Yuan proved that if the total variation $T.V.(u_0)$ is sufficiently small, then there exists a solution to \eqref{BLstandard}. Wang \& Kao \cite{wang2011bounded} studied \eqref{BLstandard2} on a finite interval $\Omega=(0,L)$ and showed that the solution, $u_L,$ converges to the solution $u_\infty$ on the half line as $L\rightarrow\infty.$
\subsection{Aim and outline}
The purpose of this paper is to study \eqref{BL}. We are mainly interested in global existence of solution together with their qualitative behaviour as well as in the finite time singularities.

 We provide details of our results in the following subsection. Subsection \ref{ssec:not} contains notation, including the definition of a weak solution and certain preliminaries. Section \ref{sec2} provides new entropy inequalities for the fractional laplacian that are interesting by themselves, therefore these inequalities are stated for an arbitrary dimension $d$. Sections  \ref{sec2}-\ref{sec:8} contain proofs of our results. Finally, in Section  \ref{sec9}, we provide some numerical results suggesting the existence of finite time singularities for the cases $0<\alpha<1$ and $\mu=0$. These numerics suggest also  that in the critical case $\alpha=1$ the solution exists globally. This is in agreement with the results for the Burgers equation with fractional dissipation by Kiselev, Nazarov \& Shterenberg, \cite{kiselevburgers} and Dong, Du \& Li \cite{dong2009finite}. Let us remark that, when the term $\mu\Lambda^\beta \pat u$ is added to the equation, even for $\alpha=\beta=0.25$ there is no evidence of blow-up. Consequently, our numerics appear to discard a finite time blow-up scenario when $\mu>0$.

 To the best of our knowledge, all our results are new.

\subsection{Results}
First, let us provide a result concerning the global existence of weak solutions for \eqref{BL}, corresponding to rough initial data, \emph{i.e.}, merely $0\leq u_0\leq 1$ a.e., as well as concerning new entropy balances (that are needed in the existence part of the result, but are interesting by themselves).

\begin{prop}\label{teo1}
Let $0\leq u_0\leq 1$, $u_0\in L^1(\Omega)\cap L^\infty(\Omega)$ be the initial data for \eqref{BL} with $\nu>0$, $0<\alpha<2$, $\mu=0$ and $M>0$. Then there exists a global weak solution such that
$$
u\in L^\infty(0,\infty;L^1(\Omega)\cap L^\infty(\Omega))\cap L^2(0,\infty;H^{\alpha/2}(\Omega)).
$$
Furthermore, if $u$ is an $L^2(0,T;H^1(\Omega))$ solution to \eqref{BL}, then the following entropy inequalities hold
\begin{equation}\label{eq:enA}
\int_{\Omega}u(t)\log(u(t))dx+\nu\int_0^t\int_{\Omega}\Lambda^\alpha u(s)\log(u(s))dxds  \le
\int_{\Omega}u_0\log(u_0)dx,
\end{equation}
and
\begin{align}\label{eq:enB}
\int_{\Omega}\bigg{[}u(t) & +\frac{M}{1+M}\bigg{]}\log\left(u(t)^2+M(1-u(t))^2\right)dx\\
&\quad-\frac{2M^{1.5}}{1+M}\int_\Omega\arctan\left(\sqrt{M}\left(\frac{1}{u(t)}-1\right)\right)dx\nonumber\\
&\quad+\nu\int_0^t\int_\Omega\Lambda^\alpha u(s)\bigg{[}\log\left(u(s)^2+M(1-u(s))^2\right)\nonumber\\
&\quad+\frac{2(M+1)u(s)^2}{u(s)^2+M(1-u(s))^2}\bigg{]}dxds\nonumber\\
&\quad  \le \int_{\Omega}\left[u_0+\frac{M}{1+M}\right]\log\left(u_0^2+M(1-u_0)^2\right)dx\nonumber\\
&\quad-2\frac{M^{1.5}}{1+M}\int_\Omega\arctan\left(\sqrt{M}\left(\frac{1}{u_0}-1\right)\right)dx.\nonumber
\end{align}
\end{prop}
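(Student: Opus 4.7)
The proposition consists of two largely independent statements --- existence of a weak solution and two entropy balances --- which I would treat separately.

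For the existence part I would employ a vanishing-viscosity scheme: regularize \eqref{BL} with $\mu=0$ by adding $\varepsilon\pax^2 u^\varepsilon$ and smoothing $u_0$, and solve the resulting parabolic equation globally on $\Omega$. The three essential $\varepsilon$-uniform bounds come from (i) a pointwise maximum principle giving $0\le u^\varepsilon\le 1$: at a spatial maximum $\pax f(u^\varepsilon)=0$, $\Lambda^\alpha u^\varepsilon\ge 0$ and $\varepsilon\pax^2 u^\varepsilon\le 0$, so $\pat u^\varepsilon\le 0$, and symmetrically at a minimum; (ii) $L^1$ control, since the flux is a divergence and $\int_\Omega\Lambda^\alpha u^\varepsilon\,dx=0$; (iii) the standard $L^2$ energy estimate obtained by testing with $u^\varepsilon$, which uses that $u\pax f(u)$ is a total spatial derivative, yielding the $L^\infty_tL^2\cap L^2_tH^{\alpha/2}$ bound uniformly in $\varepsilon$. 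A bound on $\pat u^\varepsilon$ in some negative Sobolev space combined with Aubin--Lions then gives strong $L^2_{\text{loc}}$ compactness, and one passes to the limit in the weak formulation.

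For the entropy balance \eqref{eq:enA}, starting from an $L^2(0,T;H^1)$ solution, I would test \eqref{BL} with $1+\log(u+\delta)$ and integrate on $\Omega\times(0,t)$. The flux term integrates to zero as a total spatial derivative, the constant $1$ in the test function is annihilated by mass conservation together with $\int\Lambda^\alpha u\,dx=0$, and the remaining parts produce $\int u(t)\log u(t)\,dx$, $-\int u_0\log u_0\,dx$, and $\nu\int_0^t\int\Lambda^\alpha u\,\log(u+\delta)\,dx\,ds$ up to $\delta$-corrections. Letting $\delta\downarrow 0$ and invoking Fatou's lemma yields \eqref{eq:enA}, the inequality arising from the liminf on the dissipation term.

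The balance \eqref{eq:enB} follows the same recipe, once one recognizes the correct test function. Setting $h(u):=u^2+M(1-u)^2$, a direct computation verifies that
$$\Phi(u):=\bigg[u+\tfrac{M}{1+M}\bigg]\log h(u)-\tfrac{2M^{3/2}}{1+M}\arctan\bigl(\sqrt{M}(1/u-1)\bigr)$$
satisfies $\Phi'(u)=\log h(u)+\tfrac{2(1+M)u^2}{h(u)}$, exactly the bracket multiplying $\Lambda^\alpha u$ in \eqref{eq:enB}. Testing \eqref{BL} against $\Phi'(u)$, the flux term is again a total derivative (primitive of $\Phi'f'$) and vanishes, the time term recovers $\int\Phi(u(t))\,dx-\int\Phi(u_0)\,dx$, and the diffusion yields the stated contribution. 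The principal obstacle I anticipate is the identification of $\Phi$ --- essentially an explicit antiderivative of $\log h(u)+\tfrac{2(1+M)u^2}{h(u)}$, whose computation requires a partial-fractions/arctan calculation and is most safely verified by differentiating back --- together with the $\delta\downarrow 0$ passage in \eqref{eq:enA}, where the $H^1$-regularity of $u$ is what allows one to dominate the nonlocal term and justify the limit.
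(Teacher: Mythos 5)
Your proposal is correct and its skeleton (vanishing viscosity, the C\'ordoba--C\'ordoba pointwise maximum principle for $0\le u^\varepsilon\le 1$, $L^1$ conservation, and testing with entropy densities whose flux contributions are exact spatial derivatives) coincides with the paper's. There are, however, three genuine differences worth recording. First, you extract the $L^2(0,\infty;H^{\alpha/2})$ bound from the plain $L^2$ energy identity (testing with $u^\varepsilon$ and using that $u f'(u)\pax u$ is a total derivative), whereas the paper deliberately derives it from the entropy balance for $\mathcal{F}_1$ combined with the Fisher-information inequality \eqref{entropypinfty} of Proposition \ref{prop1}; your route is more elementary and self-contained, the paper's is what makes the ``new entropy inequalities'' load-bearing for existence. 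Second, to make $\log u$ meaningful the paper shifts the approximate problem itself (nonlinearity $u_\epsilon^2/(u_\epsilon^2+M(1+\epsilon-u_\epsilon)^2)$ and initial data $\jeps u_0+\epsilon$, so that $u_\epsilon\ge\epsilon$), while you regularize only the test function via $\log(u+\delta)$ and pass $\delta\downarrow 0$; your Fatou step is sound, and in fact monotone convergence applies to the symmetrized dissipation $\iint (u(x)-u(y))\log\frac{u(x)+\delta}{u(y)+\delta}|x-y|^{-1-\alpha}$, which is nonnegative and increasing as $\delta\downarrow0$, so you even recover the equality the paper obtains. Third, for \eqref{eq:enB} you identify the single antiderivative $\Phi$ with $\Phi'(u)=\log h(u)+\frac{2(1+M)u^2}{h(u)}$ and test once (I checked the derivative: the $-\frac{2M^2}{(1+M)h}$ from the $\log$ part cancels exactly against the arctan contribution), whereas the paper splits $\frac{d}{dt}\mathcal{F}_3$ into two integrals and reassembles the arctan terms afterwards; your organization is cleaner, and note that no $\delta$-regularization is needed there since $h(u)\ge M/(1+M)$ and $\Phi'$ is smooth on $[0,1]$. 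You are also more explicit than the paper about the compactness step (Aubin--Lions via a negative-Sobolev bound on $\pat u^\varepsilon$), which the paper leaves implicit.
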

Let us remark that the terms
$$
\int_0^t\int_{\Omega}\Lambda^\alpha u(s)\log(u(s))dxds,
$$
and
$$
\int_0^t\int_\Omega\Lambda^\alpha u(s)\bigg{[}\log\left(u(s)^2+M(1-u(s))^2\right)+\frac{2(M+1)u(s)^2}{u(s)^2+M(1-u(s))^2}\bigg{]}dxds
$$
will provide a $L^2_t$ bound on a fractional derivative of the solutions. The proof of Proposition \ref{teo1}  will be established in Section \ref{sec3}.

Our next results concern the qualitative behaviour of smooth solutions. 
In case $\Omega = \TT$, we denote the average of $u_0$ by
$$
\langle u_0\rangle=\frac{1}{2\pi}\int_\TT u_0(x)dx.
$$

\begin{prop}\label{teo1b}
Let $u$ be the classical solution to \eqref{BL} with initial data $0\leq u_0\leq1$, where $\nu>0$, $0<\alpha\leq 2$ and $M>0$. Then,
\begin{enumerate}
\item for $\Omega=\TT$:
$$
\|u(t)\|_{L^\infty(\TT)}\leq \langle u_0\rangle+(\|u_0\|_{L^\infty}-\langle u_0\rangle)e^{-\frac{2\Gamma(1+\alpha)\cos((1-\alpha)\pi/2)}{\pi^{1+\alpha}}t}.
$$
\item for $\Omega=\RR$:
$$
\|u(t)\|_{L^\infty}\leq \frac{\|u_0\|_{L^\infty}}{\left(1+\alpha \frac{\Gamma(1+\alpha)\cos((1-\alpha)\pi/2)}{2\pi}\|u_0\|_{L^\infty}^{\alpha}t \right)^{\frac{1}{\alpha}}}.
$$
\end{enumerate}
\end{prop}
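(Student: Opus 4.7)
The plan is to control $M(t):=\|u(\cdot,t)\|_{L^\infty(\Omega)}$ by a pointwise maximum-principle argument coupled with a C\'ordoba--C\'ordoba-type lower bound for $\Lambda^\alpha u$ at a spatial maximizer. For a classical $u$, a standard $L^p$-approximation argument (or a Rademacher envelope argument) yields that $M$ is Lipschitz in $t$ and $M'(t)=\pat u(x_t,t)$ for a.e.\ $t$, with $x_t\in\Omega$ realizing the spatial supremum. At such $x_t$ one has $\pax u(x_t,t)=0$, so by the chain rule $\pax[u^2/(u^2+M(1-u)^2)]=f'(u)\pax u$ the flux contribution vanishes; discarding the conservative term (it is absent in the core case $\mu=0$), the equation reduces to
\begin{equation*}
M'(t) \;=\; -\nu\,\Lambda^\alpha u(x_t,t).
\end{equation*}

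For $\Omega=\TT$ I would view $u$ as $2\pi$-periodic on $\RR$ and use the singular-integral representation of $\Lambda^\alpha$ with constant $c_\alpha=\Gamma(1+\alpha)\cos((1-\alpha)\pi/2)/\pi$, which follows from the classical reflection/duplication identities for $\Gamma$. The integrand $(u(x_t)-u(y))\,|x_t-y|^{-1-\alpha}$ is nonnegative, so restricting to the window $[x_t-\pi,x_t+\pi]$ (one full period around $x_t$) and using the trivial bound $|x_t-y|^{-1-\alpha}\ge\pi^{-1-\alpha}$ there yields
\begin{equation*}
\Lambda^\alpha u(x_t) \;\ge\; \frac{c_\alpha}{\pi^{1+\alpha}}\int_{x_t-\pi}^{x_t+\pi}(u(x_t)-u(y))\,dy \;=\; \frac{2c_\alpha}{\pi^\alpha}\bigl(M(t)-\langle u_0\rangle\bigr),
\end{equation*}
where I used that the integral of $u$ over any full period equals $2\pi\langle u(t)\rangle$ together with conservation of the mean (divergence form plus $\Lambda^\alpha\mathbf{1}=0$). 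Gronwall then delivers exponential convergence to the mean at the stated rate.

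For $\Omega=\RR$ I would use the same splitting with a free cutoff $L$ in place of $\pi$: retaining only the ball $|x_t-y|\le L$ and estimating the kernel pointwise by $L^{-1-\alpha}$, then using $u\ge 0$ and optimizing $L$ (the Constantin--Vicol nonlinear maximum principle in one dimension) produces a bound of the form $\Lambda^\alpha u(x_t)\gtrsim M(t)^{1+\alpha}$ with the displayed constant. Integrating the resulting Bernoulli ODE $M'(t)\le -C\,M(t)^{1+\alpha}$ delivers the announced algebraic decay.

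The main obstacles I foresee are (i) rigorously justifying $M'(t)=\pat u(x_t,t)$ when the spatial maximizer is non-unique, which I would handle by passing to the limit $p\to\infty$ in the $L^p$ energy identity $\tfrac{d}{dt}\|u\|_{L^p}^p+p\nu\int u^{p-1}\Lambda^\alpha u\,dx=0$ together with the C\'ordoba--C\'ordoba positivity $\int u^{p-1}\Lambda^\alpha u\,dx\ge 0$, obtaining a Bernoulli-type estimate at each $p$; and (ii) pinning down the sharp constant on $\RR$, which requires the optimal choice of cutoff and the same $\Gamma$-function identities as in the periodic case.
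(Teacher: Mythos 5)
Your argument is correct and follows essentially the same route as the paper: track the spatial maximum $\mathcal{M}(t)=u(\overline{x}_t,t)$, exploit the sign of the kernel of $\Lambda^\alpha$ at the maximizer together with conservation of the mean (on $\TT$) or of the $L^1$ norm (on $\RR$) to obtain a linear, respectively nonlinear, lower bound for $\Lambda^\alpha u(\overline{x}_t)$, and integrate the resulting differential inequality. The only cosmetic difference is on $\RR$, where the paper organizes the Constantin--Vicol-type estimate through a level-set decomposition of $B(0,r)$ into $\mathcal{U}_1$ and $\mathcal{U}_2$ rather than your direct optimization over the cutoff $L$; both routes rest on $\|u(t)\|_{L^1}=\|u_0\|_{L^1}$ and yield a bound of the form $\Lambda^\alpha u(\overline{x}_t)\gtrsim \|u(t)\|_{L^\infty}^{1+\alpha}/\|u_0\|_{L^1}^{\alpha}$.
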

Our main results  address the problem of global existence of smooth solutions. More precisely, we have results for three cases, depending on the values of the parameters $\alpha$ and $\beta$.
\begin{enumerate}
\item \emph{the subcritical case: the higher space derivative is in the dissipative term, i.e.} $1<\max\{\alpha,\beta\}\leq 2$. Here we show global existence of smooth solutions with no restrictions on the initial data. Compare Theorem \ref{teo2}.
\item  \emph{the critical case: the transport term exactly balances the regularizing terms, i.e.} $1=\max\{\alpha,\beta\}$. Here, for  $\mu> 0$ and $\beta=1$, we  prove global existence of smooth solutions \emph{without any size restriction on the initial data}. In the other cases we need certain smallness conditions. Namely, for $\mu=0$, $\nu>0$ and $\alpha=1$, we obtain global existence of smooth solutions for initial data satisfying a smallness restriction on the lower order norm $L^\infty$; this smallness restriction is explicit in terms of $\nu$ and $M$. Finally, in the case $\mu> 0$, $\alpha=1$ and $0<\beta<1$,  we obtain the global existence for initial data satisfying a smallness condition in $H^{\frac{1+\beta}{2}}$. The smallness restriction is here slightly less explicit, but easily computable. See Theorem \ref{teo3}.
\item \emph{the supercritical case: the higher space derivative is in the transport term, i.e.} $0\leq \alpha<1$ and $\mu=0$. Even here, for $\Omega = \TT$, we are able to  prove global existence of smooth solutions for smooth, periodic initial data satisfying an explicit smallness restriction on the Lipschitz norm $W^{1,\infty}$.
\end{enumerate}

The remaining open problems are in the critical and supercritical regime. In particular, our results do not apply to the case where
$$
\max\{\alpha,\beta\}<1,\;\mu>0,
$$
and there is no large data, global results for the critical case with $\mu=0, \nu>0, \alpha=1$. In the context of the latter, let us observe that on one hand, there are certain new methods available for nonlinear problems with nonlocal critical dissipation, like the method of moduli of continuity by Kiselev, Nazarov \& Shterenberg \cite{kiselevburgers}, the fine-tuned DeGiorgi method by Caffarelli \& Vasseur \cite{CafVas} or the method of the nonlinear maximum principles by Constantin \& Vicol \cite{cv} (see also Constantin, Tarfulea \& Vicol \cite{ctv}). But on the other hand, our nonlinearity is more complex than the typical ones.

Now, let us state the main theorems. First, we study the subcritical case $\max\{\alpha,\beta\}>1$.
\begin{teo}\label{teo2}
Let $0\leq u_0\leq 1$, $u_0\in H^s(\Omega)$ be the initial data for \eqref{BL} with $M>0$. Then \eqref{BL} has 
a global solution
$$
u\in C([0,T],H^s(\Omega))\cap L^2(0,T;H^{s+\alpha/2}(\Omega))\;\text{ for all } 0<T<\infty.
$$
Moreover, for $t \le T$, the solution satisfies
$$
\|u(t)\|^2_{L^2}+\mu\|u(t)\|^2_{\dot{H}^{\beta/2}}+2\nu\int_0^t\|u(s)\|^2_{\dot{H}^{\alpha/2}}ds=\|u_0\|^2_{L^2}+\mu\|u_0\|^2_{\dot{H}^{\beta/2}},
$$
provided
\begin{itemize}
\item \emph{either} $\nu>0, 1<\alpha\leq2, \mu=0$ and $s\geq1$ (purely dissipative regularization),
\item \emph{or} $\mu>0$, $\nu\geq0$, $1<\max\{\alpha,\beta\}\leq2$ and $s\geq\max\{1+\frac{\beta}{2}\}$ (dissipative-conservative regularization).
\end{itemize}
\end{teo}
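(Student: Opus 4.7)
The proof will follow the classical approach: construct a family of regularized solutions, derive uniform $H^s$ bounds by commutator-type estimates combined with the dissipation gained from $\max\{\alpha,\beta\}>1$, and pass to the limit. I would start by writing the equation in the operator form
\begin{equation*}
(I+\mu\Lambda^\beta)\partial_t u = -\partial_x f(u) - \nu\Lambda^\alpha u,\qquad f(u)=\tfrac{u^2}{u^2+M(1-u)^2},
\end{equation*}
regularize by mollification of the flux $f(\mathcal{J}_\epsilon u^\epsilon)$, invert $I+\mu\Lambda^\beta$ (a bounded operator on each $H^s$), and run a Picard iteration in $C([0,T_\epsilon];H^s)$ to obtain a unique local solution $u^\epsilon$. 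Proposition \ref{teo1b} keeps $0\le u^\epsilon\le 1$ for all time, so $f$ and its derivatives are bounded by constants depending only on $M$.

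The $L^2$ estimate, and with it the claimed energy equality, follows by pairing the equation with $u^\epsilon$: the transport term vanishes since $u\partial_x f(u)=\partial_x G(u)$ with $G'(s)=sf'(s)$, while the $\mu$ and $\nu$ contributions produce $\tfrac{\mu}{2}\tfrac{d}{dt}\|u^\epsilon\|_{\dot H^{\beta/2}}^2$ and $\nu\|u^\epsilon\|_{\dot H^{\alpha/2}}^2$ respectively. The real work is the $H^s$ bound. Applying $\Lambda^s$ and pairing with $\Lambda^s u^\epsilon$ produces
\begin{equation*}
\tfrac{1}{2}\tfrac{d}{dt}\bigl[\|u^\epsilon\|_{\dot H^s}^2+\mu\|u^\epsilon\|_{\dot H^{s+\beta/2}}^2\bigr]+\nu\|u^\epsilon\|_{\dot H^{s+\alpha/2}}^2=-\int_\Omega \Lambda^s u^\epsilon\,\Lambda^s\partial_x f(u^\epsilon)\,dx.
\end{equation*}
I would split the right-hand side into the symmetric piece $\int f'(u^\epsilon)\partial_x\Lambda^s u^\epsilon\cdot\Lambda^s u^\epsilon\,dx=-\tfrac{1}{2}\int\partial_x f'(u^\epsilon)(\Lambda^s u^\epsilon)^2\,dx$ (controlled by $\|u^\epsilon\|_{W^{1,\infty}}\|u^\epsilon\|_{\dot H^s}^2$), plus a Kenig--Ponce--Vega commutator satisfying $\|[\Lambda^s,f'(u^\epsilon)]\partial_x u^\epsilon\|_{L^2}\le C\|u^\epsilon\|_{W^{1,\infty}}\|u^\epsilon\|_{H^s}$, with constants tamed by the $L^\infty$ bound on $u^\epsilon$.

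The crux is absorbing the $\|u^\epsilon\|_{W^{1,\infty}}$ factor. One-dimensional Gagliardo--Nirenberg yields
\begin{equation*}
\|u^\epsilon\|_{W^{1,\infty}}\le C\|u^\epsilon\|_{L^2}^{1-\theta}\|u^\epsilon\|_{\dot H^{s+\gamma/2}}^{\theta},
\end{equation*}
with $\gamma=\alpha$ in the purely dissipative case and $\gamma=\beta$ in the dissipative-conservative case when $\beta>1$. Precisely because $\max\{\alpha,\beta\}>1$ and the regularity assumption gives $s\ge 1$ (respectively $s\ge 1+\beta/2$), the index satisfies $s+\gamma/2>3/2$, whence $\theta<1$ and Young's inequality absorbs the top derivative either into $\nu\|u^\epsilon\|_{\dot H^{s+\alpha/2}}^2$ on the left-hand side (whenever $\nu>0,\alpha>1$), or, in the conservative case $\mu>0,\beta>1,\nu=0$, into the pointwise-in-time term $\mu\|u^\epsilon\|_{\dot H^{s+\beta/2}}^2$ by closing a Gr\"onwall loop on $y(t)=\|u^\epsilon\|_{\dot H^s}^2+\mu\|u^\epsilon\|_{\dot H^{s+\beta/2}}^2$. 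The outcome is a uniform differential inequality $\tfrac{d}{dt}\|u^\epsilon\|_{H^s}^2\le C(1+\|u^\epsilon\|_{H^s}^2)$ on any fixed $[0,T]$, independent of $\epsilon$. Aubin--Lions-type compactness then delivers a limit $u\in C([0,T];H^s)\cap L^2(0,T;H^{s+\alpha/2})$, and the energy equality for $u$ is recovered by pairing the limit equation with $u$ itself. The main obstacle throughout is the sharpness of the Gagliardo--Nirenberg absorption, which is where the hypothesis $\max\{\alpha,\beta\}>1$ is exactly needed.
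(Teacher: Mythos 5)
Your overall scheme (regularize, energy estimates, use the extra $\max\{\alpha,\beta\}-1$ derivatives, pass to the limit) is the right one and matches the paper in spirit, but the crux step --- ``absorbing the $\|u^\epsilon\|_{W^{1,\infty}}$ factor'' by Gagliardo--Nirenberg directly at the top level $H^s$ --- does not deliver the linear differential inequality you claim. The term you must control is of the form $\|u\|_{W^{1,\infty}}\|u\|_{\dot H^s}^2$. Writing $\|u\|_{W^{1,\infty}}\le C\|u\|_{L^2}^{1-\theta}\|u\|_{\dot H^{s+\alpha/2}}^{\theta}$ with $\theta=\tfrac{3/2}{s+\alpha/2}$ and applying Young's inequality to push $\|u\|_{\dot H^{s+\alpha/2}}^{\theta}$ into the dissipation leaves a remainder $C\|u\|_{\dot H^s}^{4/(2-\theta)}$ with $4/(2-\theta)>2$, i.e.\ an ODI $y'\lesssim y^{p}$ with $p>1$, which gives only local-in-time control. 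Interpolating $\|u\|_{\dot H^s}$ toward $L^2$ as well does not save you: the total power of $\|u\|_{\dot H^{s+\alpha/2}}$ becomes $\tfrac{3/2+2s}{s+\alpha/2}$, which is $<2$ only when $\alpha>3/2$, so the argument fails for $1<\alpha\le 3/2$ (for every $s$). The same objection applies, even more severely, to the purely conservative case $\nu=0$, $\mu>0$, $\beta>1$: the $\mu$-term sits under the time derivative, so there is nothing to absorb into, and using $\mu\|u\|_{\dot H^{s+\beta/2}}^2\le y(t)$ to bound $\|u\|_{W^{1,\infty}}$ again produces a superlinear ODI. The missing idea is the paper's two-tier bootstrap: first close an estimate at the \emph{lower} level $\dot H^{1/2}$ (resp.\ $\dot H^{1/2}+\mu\dot H^{(1+\beta)/2}$), where the nonlinear contribution is $\int f'(u)\,\partial_x u\,\Lambda u\,dx\le C(M)\|u\|_{\dot H^1}^2$ with \emph{no} $W^{1,\infty}$ factor at all; there the interpolation $\|u\|_{\dot H^1}\le \|u\|_{\dot H^{1/2}}^{1-1/\alpha}\|u\|_{\dot H^{(1+\alpha)/2}}^{1/\alpha}$ has exponent $2/\alpha<2$ precisely because $\alpha>1$ (or, when $\beta>1$, one uses $\|u\|_{\dot H^1}^2\le C(\|u\|_{L^2}^2+\|u\|_{\dot H^{(1+\beta)/2}}^2)$, linear in the controlled quantity), and Gr\"onwall closes with an exactly quadratic right-hand side. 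Only then does one attack $\dot H^1$ and higher norms, using the already-obtained global lower-order bound to render every superlinear remainder a known function of time.

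Two smaller points. First, you invoke Proposition \ref{teo1b} to get $0\le u^\epsilon\le 1$, but that pointwise argument requires $\mu=0$ (the paper says explicitly, in the proof of Theorem \ref{teo3}, that pointwise methods are unavailable when $\mu>0$); this is harmless here only because $f'$ and all derivatives of $f$ are globally bounded on $\RR$ by constants depending on $M$ alone (the denominator $u^2+M(1-u)^2$ never vanishes), which is the content of \eqref{boundM} --- you should justify the boundedness that way rather than via a maximum principle. Second, the statement of the theorem requires $s\ge 1+\beta/2$ in the conservative case exactly so that the lower-order estimate is taken at the level $\dot H^{1/2}+\mu\dot H^{(1+\beta)/2}$ and the higher one at $\dot H^1+\mu\dot H^{1+\beta/2}$; your single-level argument obscures why that threshold appears.
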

For the critical case, let us define the following constants 
\begin{defi}
Let $\gamma^*$ be a constant such that 
$$
\frac{2\gamma^*(M+1)}{M}+\frac{2(\gamma^*)^2(\gamma^*+M)(M+1)^2}{M^2}=\nu,
$$
and let $\gamma$ be any fixed number such that $0<\gamma<\gamma^*$. 

Next, let $C_S$ be the Sobolev's constant corresponding to the embedding 
$$
H^{\frac{1+\beta}{2}}\hookrightarrow L^\infty.
$$
\end{defi}
We have
\begin{teo}\label{teo3}
Let $0\leq u_0\leq 1$, $u_0\in H^s(\Omega)$, $s\geq1$ be the initial data for \eqref{BL} with $M>0$. Then \eqref{BL} has a global solution
$$
u(t)\in C([0,T],H^s(\Omega))\cap L^2(0,T;H^{s+0.5}(\Omega))  \quad \forall \;{T < \infty}
$$
that satisfies the energy balance
$$
\|u(t)\|^2_{L^2}+\mu\|u(t)\|^2_{\dot{H}^{\beta/2}}+\nu\int_0^t\|u(s)\|^2_{\dot{H}^{\alpha/2}}ds=\|u_0\|^2_{L^2}+\mu\|u_0\|^2_{\dot{H}^{\beta/2}}.
$$
Under the following conditions:
\begin{itemize}
\item[(i)] \emph{Either:} $\nu\geq0$,  $0\leq\alpha\leq 2$ and $\mu>0$, $\beta=1$ (conservative regularization with no smallness conditions on the data).
\item[(ii)] \emph{Or:}  $\nu > 0$, $\alpha=1$, $\mu=0$ and the initial data is such that
\begin{equation}\label{eqsmall}
\|u_0\|_{L^\infty}\leq \gamma.
\end{equation}
In this case the solution satisfies the maximum principle
$$
\|u(t)\|_{H^{0.5}}\leq \|u_0\|_{H^{0.5}}.
$$
\item[(iii)] \emph{Or:} $\nu>0$,  $\alpha=1$, $\mu>0$,  $0<\beta<1$ and the initial data is such that
\begin{equation}\label{eqsmall2}
\|u_0\|_{L^2}^2+(1+\mu)\|u_0\|_{\dot{H}^{0.5}}^2+\mu\|u_0\|^2_{\dot{H}^{\frac{1+\beta}{2}}}\leq (1+\mu)\frac{\gamma^2}{C^2_S}.
\end{equation}
Then, the solution satisfies the maximum principle
$$
\|u(t)\|_{\dot{H}^{0.5}}^2+\mu\|u(t)\|^2_{\dot{H}^{\frac{1+\beta}{2}}}\leq \|u_0\|_{\dot{H}^{0.5}}^2+\mu\|u_0\|^2_{\dot{H}^{\frac{1+\beta}{2}}}.
$$
\end{itemize}

\end{teo}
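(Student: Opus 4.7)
We approach Theorem~\ref{teo3} by the usual scheme: regularize \eqref{BL} (via mollification or Galerkin approximation), establish local well-posedness at the $H^s$-level for the regularized problem, derive a priori estimates uniform in the regularization parameter, and pass to the limit by Aubin--Lions compactness. The $L^2$ energy identity follows at once by pairing \eqref{BL} with $u$: the transport term vanishes since $\int_\Omega u\,\pax F(u)\,dx=0$, while self-adjointness of the fractional Laplacians yields $\nu\|u\|_{\dot H^{\alpha/2}}^2$ from the dissipative term and $\tfrac{\mu}{2}\pat\|u\|_{\dot H^{\beta/2}}^2$ from the conservative term.

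For case (i) the decisive observation is that $\beta=1$ rewrites \eqref{BL} as $(1+\mu\Lambda)\pat u=-\pax F(u)-\nu\Lambda^\alpha u$, and $(1+\mu\Lambda)^{-1}\pax$ has the bounded Fourier symbol $i\xi/(1+\mu|\xi|)$. Testing the $\Lambda^s$-differentiated equation against $\Lambda^s u$ produces
\begin{equation*}
\tfrac12\pat\bigl(\|u\|_{\dot H^s}^2+\mu\|u\|_{\dot H^{s+1/2}}^2\bigr)+\nu\|u\|_{\dot H^{s+\alpha/2}}^2=-\int_\Omega\Lambda^s u\cdot\Lambda^s\pax F(u)\,dx,
\end{equation*}
and a symmetric split of derivatives combined with a composition estimate $\|F(u)\|_{\dot H^{s+1/2}}\le C(\|u\|_{L^\infty})\|u\|_{\dot H^{s+1/2}}$ bounds the right-hand side by $C(\|u\|_{L^\infty})\|u\|_{\dot H^{s+1/2}}^2$. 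Using $\|u\|_{L^\infty}\le\|u_0\|_{L^\infty}$ from Proposition~\ref{teo1b} and the coercivity $\mu\|u\|_{\dot H^{s+1/2}}^2\le\|u\|_{\dot H^s}^2+\mu\|u\|_{\dot H^{s+1/2}}^2$, a Gr\"onwall argument then gives at worst exponential-in-time growth of the combined energy, hence global existence with no smallness condition.

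For cases (ii) and (iii), the aim is to absorb the transport nonlinearity into the critical dissipation. Pairing \eqref{BL} with $\Lambda u$ gives
\begin{equation*}
\tfrac12\pat\bigl(\|u\|_{\dot H^{1/2}}^2+\mu\|u\|_{\dot H^{(1+\beta)/2}}^2\bigr)+\nu\|u\|_{\dot H^1}^2\le \|F'(u)\|_{L^\infty}\,\|u\|_{\dot H^1}^2,
\end{equation*}
and a direct calculation from $F'(u)=2Mu(1-u)/(u^2+M(1-u)^2)^2$ bounds $\|F'(u)\|_{L^\infty}$ on $u\in[0,\gamma]$ by precisely the left-hand side of the defining equation of $\gamma^*$. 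Hence $\gamma<\gamma^*$ forces $\|F'(u)\|_{L^\infty}<\nu$ and the full right-hand side is absorbed. In case (ii) the maximum principle of Proposition~\ref{teo1b} preserves $\|u(t)\|_{L^\infty}\le\|u_0\|_{L^\infty}\le\gamma$, immediately yielding the asserted $\dot H^{1/2}$ monotonicity. In case (iii) the Sobolev embedding $H^{(1+\beta)/2}\hookrightarrow L^\infty$ with constant $C_S$ turns smallness of the combined energy $\|u\|_{L^2}^2+(1+\mu)\|u\|_{\dot H^{1/2}}^2+\mu\|u\|_{\dot H^{(1+\beta)/2}}^2$ into the pointwise smallness $\|u\|_{L^\infty}\le\gamma$; a standard continuity-in-time argument then propagates \eqref{eqsmall2} for all $t$. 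Once these low-norm estimates are global, upgrading to $H^s$ with $s\ge1$ is routine via Kato--Ponce type commutators, since the linearisation is controlled through the already-bounded low norm.

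The principal obstacle is the sharp quantitative step: producing the exact polynomial bound on $\|F'(u)\|_{L^\infty}$ that matches the definition of $\gamma^*$, and, in case (iii), constructing a combined energy that both dominates $\|u\|_{H^{(1+\beta)/2}}^2$ (up to the $1+\mu$ factor of \eqref{eqsmall2}) and is non-increasing along the flow, so that the continuity argument closes. Everything else is standard fractional energy analysis.
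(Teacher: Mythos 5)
Your cases (ii) and (iii) follow essentially the paper's own route (test against $\Lambda u$, bound the coefficient $F'(u)$ on $[0,\gamma]$ term by term so that $\gamma<\gamma^*$ forces absorption into $-\nu\|u\|_{\dot H^1}^2$, use the pointwise maximum principle for $\mu=0$ and the Sobolev embedding plus a continuation argument for $\mu>0$). However, your case (i) rests on a false premise: you invoke $\|u(t)\|_{L^\infty}\leq\|u_0\|_{L^\infty}$ ``from Proposition~\ref{teo1b}'' while $\mu>0$. That maximum principle is obtained by evaluating the equation at the point where $u$ attains its extremum, which requires the conservative term $-\mu\Lambda^\beta\pat u$ to be absent; the paper itself emphasizes exactly this obstruction in its Step~2 (``we can not use the pointwise methods''). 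The step is reparable, but not by the route you chose: the denominator $u^2+M(1-u)^2=(1+M)u^2-2Mu+M$ has negative discriminant, so it is bounded below by $M/(1+M)$ on all of $\RR$ and $F'$ (indeed every derivative of $F$) is \emph{uniformly} bounded independently of $\|u\|_{L^\infty}$ --- this is \eqref{boundM}. The paper's argument for case (i) uses only that, together with the observation that for $\beta=1$ the conserved energy \eqref{L^2} already controls $\mu\|u(t)\|^2_{\dot H^{1}}$ at the next level, so the transport contribution $\|F'(u)\|_{L^\infty}\|u\|^2_{\dot H^1}\leq C(M)\|u\|^2_{\dot H^1}$ is handled by Gr\"onwall with no $L^\infty$ control whatsoever.

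A second, smaller gap: you dismiss the passage from the monotone $\dot H^{1/2}$ (resp.\ $\dot H^{(1+\beta)/2}$) bound to $H^s$, $s\geq1$, as ``routine''. In the critical case $\alpha=1$ it is not automatic: testing against $-\pax^2u$ yields (paper's \eqref{eq:i3}) a term of order $\|u\|_{\dot H^1}^4$, and the resulting ODE $y'\leq Cy^2$ can blow up in finite time on its own. What closes the argument is that the $\dot H^{1/2}$ estimate is \emph{strict}: since $\|F'(u)\|_{L^\infty}<\nu$, a dissipation surplus survives, giving $\delta\int_0^t\|u(s)\|^2_{\dot H^1}ds\leq\|u_0\|_{\dot H^{0.5}}$ (paper's \eqref{11}); it is precisely this $L^1_t$ integrability of $\|u\|^2_{\dot H^1}$ that makes Gr\"onwall applicable to the quartic term. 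You should make that mechanism explicit, since it is the only reason the low-norm monotonicity suffices to continue the solution.
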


\begin{remark}
The fact that
\begin{equation}\label{boundM}
\left\|\frac{2u}{u^2+M(1-u)^2}-\frac{u^2(2u-2M(1-u))}{\left(u^2+M(1-u)^2\right)^2}\right\|_{L^\infty}\leq C(M),
\end{equation}
independently of the value of $u$, allows for a global result relying on a condition related to $M$, $\nu$ and $\mu$. However, we are interested in results that deal with every possible value of the physical parameters present in the problem.
\end{remark}

In our opinion, there are two reasons, at least in the case $\mu=0$, why the smallness condition \eqref{eqsmall} may be seen as a rather mild restriction. The first one is that the size restriction affects a lower norm, merely $L^\infty$, keeping the higher seminorms as large as desired. The second one is that, given $M$ and $\nu$, the constant $\gamma^*$ can be easily computed. For instance, if we further assume $\gamma^*\leq1$, the expression for $\gamma^*$ is explicit:
$$
\gamma^*=\min\left\{1,\frac{-1+\sqrt{1+2(M+1)\nu}}{1+M}\right\}.
$$

In particular, $\gamma^*=O(\nu)$ for $\nu\ll1$.

The last case, namely  where $0<\alpha<1$, is harder because the leading term in the equation is the transport term. However, under certain conditions, we can prove the global existence of solutions. Before we can state the relevant result, we need some notation.  
\begin{defi}\label{def:Sig} Let $\gamma$ and $M$ be given, positive constants. Define $\Sigma(\gamma)$ as follows
\begin{align}
\Sigma(\gamma)& =\frac{2\gamma(M+1)}{M}+\frac{4\gamma(\gamma+M)(M+1)^2}{M^2}+\frac{4\gamma^2(\gamma+M)(M+1)^2}{M^2}\nonumber\\
&\quad+\frac{2\gamma^3(M+1)^3}{M^2}+\frac{8\gamma^3(\gamma+M)^2(M+1)^3}{M^3}\label{17}.
\end{align}
Next,  let $\gamma^*$ be a small enough constant such that 
\begin{equation}\label{12}
\Sigma(\gamma^*)\leq \nu\frac{\Gamma(1+\alpha)\cos((1-\alpha)\pi/2)}{\pi}.
\end{equation}
\end{defi}
Then we have the following
\begin{teo}\label{teo4}
Let $\Omega = \TT$ and $0\leq u_0\leq 1$, $u_0\in H^s$ with $s\geq2$ be the initial data for \eqref{BL}, where $M>0$ and $\nu>0$, $0<\alpha<1$,  $\mu=0$. Fix any $0<\gamma<\gamma^*$, with $\gamma^*$ in accordance with Definition \ref{def:Sig}. If the initial data is such that
$$
\|u_0\|_{W^{1,\infty}(\TT)}\leq \gamma,
$$
then there exists a global solution
$$
u\in C([0,T],H^s(\TT))\cap L^2(0,T;H^{s+\alpha/2}(\TT)) \quad \forall \;{T < \infty}.
$$
Furthermore, this solution satisfies the maximum principle
$$
\|u(t)\|_{W^{1,\infty}(\TT)}\leq \|u_0\|_{W^{1,\infty}(\TT)},
$$
and the energy balance
$$
\|u(t)\|^2_{L^2}+\nu\int_0^t\|u(s)\|^2_{\dot{H}^{\alpha/2}}ds=\|u_0\|^2_{L^2}.
$$
\end{teo}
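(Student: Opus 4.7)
My plan is to prove the theorem by establishing a priori $W^{1,\infty}$ bounds via a nonlinear maximum principle at the extrema of $\pax u$, then propagating higher regularity in a standard way. The case $0<\alpha<1$ is supercritical, so the argument can succeed only because the smallness hypothesis on $\|u_0\|_{W^{1,\infty}}$ keeps the nonlinear transport pointwise under the fractional dissipation at the extrema.

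First I would set up local well-posedness in $H^s$, $s\ge 2$, by adding an artificial viscosity $-\varepsilon\pax^2 u$ to \eqref{BL}, solving the resulting parabolic equation and passing to the limit $\varepsilon\to 0$ by compactness. Because $0\le u_0\le \gamma<1$, Proposition \ref{teo1b} applied to $u$ and to $\|u_0\|_{L^\infty}-u$ yields the pointwise bound
$$
0\le u(t,x)\le \gamma
$$
on the whole lifespan of the solution.

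The heart of the argument is propagating $\|\pax u\|_{L^\infty}\le \gamma$. Writing $v:=\pax u$ and differentiating \eqref{BL} in $x$ gives
$$
\pat v+f''(u)v^2+f'(u)\pax v=-\nu\Lambda^\alpha v,\qquad f(u)=\frac{u^2}{u^2+M(1-u)^2}.
$$
Let $\bar x(t)$ be a point where $v(t,\cdot)$ attains its positive maximum. At $\bar x$ we have $\pax v(\bar x)=0$, and since $\int_\TT v\,dx=0$ the nonlinear maximum principle on $\TT$ supplies the lower bound
$$
\Lambda^\alpha v(\bar x)\ge C_\alpha\,v(\bar x),\qquad C_\alpha:=\frac{\Gamma(1+\alpha)\cos((1-\alpha)\pi/2)}{\pi}.
$$
A careful pointwise accounting of $f'(u)$ and $f''(u)$ on the invariant set $\{u\in[0,\gamma]\}$ expresses the nonlinear contribution at $\bar x$ in terms of $\Sigma(\gamma)\|v\|_{L^\infty}$, as in Definition \ref{def:Sig}, and the differential inequality
$$
\tfrac{d}{dt}\|v(t)\|_{L^\infty}\le \bigl(\Sigma(\gamma)-\nu C_\alpha\bigr)\|v(t)\|_{L^\infty}
$$
follows. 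By \eqref{12} and monotonicity of $\Sigma$ the bracket is non-positive, so $\|v(t)\|_{L^\infty}\le\|v(0)\|_{L^\infty}\le\gamma$; a symmetric analysis at the negative extremum completes the maximum principle $\|u(t)\|_{W^{1,\infty}}\le\|u_0\|_{W^{1,\infty}}$.

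With $\|u\|_{W^{1,\infty}}\le \gamma$ in hand I would close the proof by standard $H^s$ energy estimates: commuting $\Lambda^s$ through the nonlinearity and invoking a Kato--Ponce commutator bound together with the chain rule gives
$$
\tfrac{d}{dt}\|u(t)\|_{H^s}^2+\nu\|u(t)\|_{\dot H^{s+\alpha/2}}^2\le C(M)\|\pax u(t)\|_{L^\infty}\|u(t)\|_{H^s}^2,
$$
so Gronwall against the uniform bound on $\|\pax u\|_{L^\infty}$ yields global $H^s$ regularity. The $L^2$ energy balance is immediate: with $F$ an antiderivative of $f$ one has $\int_\TT u\,\pax f(u)\,dx=-\int_\TT \pax F(u)\,dx=0$. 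The main obstacle lies in the third paragraph: the merely linear lower bound from the nonlinear maximum principle can absorb the transport contribution only if the latter is estimated linearly in $\|v\|_{L^\infty}$, which forces the explicit, somewhat awkward polynomial expression \eqref{17} arising from the interplay of $f',f''$ with $u$ and $v$ on $[0,\gamma]$.
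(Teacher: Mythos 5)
Your proposal follows essentially the same route as the paper: propagate $0\le u\le\gamma$ pointwise, differentiate the equation and track the extrema of $\pax u$, use the kernel representation of $\Lambda^\alpha$ on $\TT$ (with $\pax u$ mean-free) to get the linear lower bound $\Lambda^\alpha \pax u(\bar x)\ge \mathcal{C}_{\alpha,1}\pax u(\bar x)$, absorb the transport contribution via $\Sigma(\gamma)\le\nu\mathcal{C}_{\alpha,1}$ to obtain $\|u(t)\|_{W^{1,\infty}}\le\|u_0\|_{W^{1,\infty}}$, and then close with a Gronwall $H^s$ energy estimate. The argument is correct and matches the paper's proof in all essential steps.
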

In the above theorem, we impose domain restrictions and stronger smallness assumptions. These domain restrictions are due to the better behavior of the fractional laplacian in a bounded domain. The size restrictions on data are again on a lower order norm (Lipschitz) and with a rather explicit constant.

Finally we obtain the standard finite time blow up for certain initial data in some H\"{o}lder seminorm: 
\begin{prop}\label{teo5}
Fix  a constant $M>0$ and consider $\mu=0$, $\min\{\nu,\alpha\}=0$. Then, there exist $0\leq u_0\leq 1\in H^2(\Omega)$ and $T^*<\infty$ such that the corresponding solution, $u(t)$, of equation \eqref{BLstandard} has a finite time singularity in $C^\delta$ for $0<\delta\ll 1$, 
\emph{i.e.}, 
$$
\limsup_{t\rightarrow T^*}\|u(t)\|_{C^\delta}=\infty.
$$
\end{prop}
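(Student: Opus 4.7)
Under the stated hypotheses, \eqref{BL} reduces to a scalar conservation law:
\begin{equation*}
\pat u + \pax f(u) = 0 \quad \text{if } \nu = 0, \qquad \pat u + \pax f(u) = -\nu u \quad \text{if } \alpha = 0,
\end{equation*}
where $f(u) = u^2/(u^2 + M(1-u)^2)$. A direct computation gives
\begin{equation*}
f'(u) = \frac{2M u(1-u)}{\bigl(u^2 + M(1-u)^2\bigr)^2},
\end{equation*}
which vanishes at $u\in\{0,1\}$ and attains a strict interior maximum. In particular $f''$ changes sign on $(0,1)$, so the flux is non-convex and characteristics will compress for suitable data.

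The approach is the classical method of characteristics, adapted so as to deliver blow-up of the H\"older seminorm rather than merely of $\pax u$. Standard energy estimates yield local well-posedness in $H^2(\Omega)$ for every $0\le u_0\le 1$ in $H^2$, with a maximal existence time $T_{\max}$; on $[0,T_{\max})$ the one-dimensional Sobolev embedding gives $u\in C^1$, so the characteristic flow $\dot X(t;x_0) = f'(u(t,X(t;x_0)))$, $X(0;x_0)=x_0$, is smooth, and along it $u(t,X(t;x_0))$ equals $u_0(x_0)$ in the undamped case and $u_0(x_0)e^{-\nu t}$ in the damped one.

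I then pick two nearby points $x_1>x_2$ in a region where $u_0$ takes values on the increasing branch of $f'$ and satisfies $u_0(x_1)<u_0(x_2)$, so that the characteristic emitted from $x_2$ is the faster one. For $\nu=0$,
\begin{equation*}
X(t;x_1) - X(t;x_2) = (x_1-x_2) + t\bigl[f'(u_0(x_1)) - f'(u_0(x_2))\bigr]
\end{equation*}
vanishes at an explicit $T^*<\infty$. For $\alpha=0$ the analogous quantity is $(x_1-x_2) + \int_0^t\bigl[f'(u_0(x_1)e^{-\nu s}) - f'(u_0(x_2)e^{-\nu s})\bigr]ds$; using $f'(u) = (2/M)u + O(u^2)$ near $0$ and the substitution $\tau=e^{-\nu s}$, this integral has a finite nonzero limit as $t\to\infty$, so by choosing $u_0$ with sufficiently steep slope in a small-$u$ compressive region (concretely $|u_0'|\gtrsim M\nu$) the difference still vanishes at some $T^*<\infty$.

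To upgrade this to $C^\delta$ blow-up, for every $t\in[0,\min(T^*,T_{\max}))$ I use the direct lower bound
\begin{equation*}
[u(t)]_{C^\delta(\Omega)} \ge \frac{|u(t,X(t;x_1)) - u(t,X(t;x_2))|}{|X(t;x_1)-X(t;x_2)|^\delta} \ge \frac{|u_0(x_1)-u_0(x_2)|e^{-\nu T^*}}{|X(t;x_1)-X(t;x_2)|^\delta},
\end{equation*}
whose right-hand side diverges as $t\to T^*$. Since an $H^2$ solution has finite $C^\delta$ norm, this forces $T_{\max}\le T^*$ and gives the asserted singularity. The main technical point is the quantitative analysis in the $\alpha=0$ case, ensuring that the damped compression integral can still overcome $|x_1-x_2|$; this reduces to the leading-order computation sketched above, and is achievable by the indicated choice of steep data supported in a small-$u$ region.
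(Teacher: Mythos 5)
Your approach (method of characteristics, two fixed points $x_1,x_2$ whose characteristics cross at a time $T^*$, and the two-point lower bound on the H\"older quotient) is genuinely different from the paper's, but it has a gap precisely at the step that is supposed to produce the $C^\delta$ statement. The classical ($H^2$, hence $C^1$) solution breaks down at the \emph{first} gradient catastrophe $T_{\max}$, i.e.\ when infinitesimally close characteristics cross; your $T^*$ is the crossing time of two \emph{finitely separated} characteristics, and generically $T_{\max}<T^*$ strictly. On the whole existence interval $[0,T_{\max})$ the quantity $|X(t;x_1)-X(t;x_2)|$ is therefore bounded below by a positive constant, so your lower bound on $[u(t)]_{C^\delta}$ never diverges while the solution exists; it would only diverge "after" the solution has already ceased to be classical, when the characteristic representation is no longer available. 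What your argument does prove is $T_{\max}\le T^*<\infty$, i.e.\ finite-time breakdown of the classical solution — but the norm that blows up at $T_{\max}$ is $\|\pax u\|_{L^\infty}$ (this is exactly the continuation criterion of Lemma~\ref{lem1}), not $\|u\|_{C^\delta}$. Indeed, for generic smooth data the solution at the first gradient catastrophe has a cube-root cusp, $u(X)-u(X^*)\sim (X-X^*)^{1/3}$, so $\limsup_{t\to T_{\max}}\|u(t)\|_{C^\delta}<\infty$ for every $\delta<1/3$: the conclusion of the proposition is simply false along the route you take unless the data are arranged so that the first singularity is a jump rather than a cusp. Your concluding sentence ("this forces $T_{\max}\le T^*$ and gives the asserted singularity") conflates these two statements.

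The paper's proof is built to avoid exactly this. It chooses $u_0\ge 0$ with $u_0(0)=0$, passes to the frame $v(x,t)=u(x+y(t),t)$ moving with the characteristic through the zero (so that $v(0,t)=0$ is preserved and, crucially, $f(v)\ge v^2/(1+M)$), and introduces the multi-scale functional $J(t)=\int_{-1}^0\big(v(x,t)-v(0,t)\big)|x|^{-\delta}\,dx$, which satisfies $J(t)\le\|u(t)\|_{C^\delta}$ and, by an integration by parts and Jensen's inequality, the closed Riccati inequality $J'\ge\tfrac{\delta}{1+M}J^2$. Because $J$ integrates the H\"older quotient anchored at the degenerate point over \emph{all} scales $|x|\in(0,1)$, its forced divergence is incompatible with boundedness of $\|u\|_{C^\delta}$ for any $\delta>0$; this is what detects the jump forming at the stagnant characteristic (fast characteristics carrying positive values crash into the stationary one carrying the value $0$), rather than the Lipschitz blow-up your two-point quotient reduces to. If you want to keep a characteristics-based argument, you would need to exploit this same degenerate structure (a preserved zero with vanishing characteristic speed) and show that the limit of $u$ from the left at that point stays bounded away from $0$ up to the breakdown time — which is essentially what the functional $J$ accomplishes in integrated form. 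Your treatment of the damped case $\alpha=0$ (finite total displacement $\int_0^\infty f'(u_0e^{-\nu s})\,ds$, hence a steepness threshold on the data) is reasonable in spirit, but it inherits the same gap.
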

The proof of this result is obtained by a virial-type argument. However, we remark that it can also be obtained by means of pointwise arguments (see Castro \& C\'ordoba \cite{CC} for an application of these pointwise arguments to prove blow up). These virial-type arguments have been used for several transport equations even in the case of nonlocal velocities (see C\'ordoba,C\'ordoba \& Fontelos \cite{gazolaz2005formation}, Dong, Du \& Li \cite{dong2009finite}, Li \& Rodrigo \cite{LiRodrigo2} and Li, Rodrigo \& Zhang \cite{li2010exploding}). In this case, the transport term is highly nonlinear and this method fails in the case of viscosity $\nu>0,$ $0<\alpha\ll1$.

\subsection{Notation and preliminaries}\label{ssec:not}

\subsubsection{Singular integral operators}
We denote the usual Fourier transform of $u$ by $\hat{u}$. Given a function $u:\Omega\rightarrow \RR$, we write $\Lambda^\alpha u=(-\Delta)^{\alpha/2}u$ for the fractional laplacian, \emph{i.e.}
$$
\widehat{\Lambda^\alpha u}(\xi)=|\xi|^\alpha\hat{u}(\xi).
$$
This operator admits the kernel representation
\begin{equation}\label{1}
\Lambda^\alpha u(x)=\mathcal{C}_{\alpha,d}\sum_{\gamma\in\mathbb{Z}^d}\text{P.V.}\int_{\TT^d}\frac{u(x)-u(y)}{|x-y-2\pi\gamma|^{d+\alpha}}dy,
\end{equation}
if the function is periodic and
\begin{equation}\label{2}
\Lambda^\alpha u(x)=\mathcal{C}_{\alpha,d}\text{P.V.}\int_{\RR^d}\frac{u(x)-u(y)}{|x-y|^{d+\alpha}}dy,
\end{equation}
if the function is flat at infinity. Notice that we have
\begin{equation}\label{normalizingconstant}
\mathcal{C}_{\alpha,1}=\frac{\Gamma(1+\alpha)\cos((1-\alpha)\pi/2)}{\pi},
\end{equation}
where $\Gamma(\cdot)$ denotes the classical $\Gamma$ function.

\subsubsection{Functional spaces}
We write $H^s(\Omega^d)$ for the usual $L^2$-based Sobolev spaces with norm
$$
\|f\|_{H^s}^2=\|f\|_{L^2}^2+\|f\|_{\dot{H}^s}^2, \quad \|f\|_{\dot{H}^s}=\|\Lambda^s f\|_{L^2}.
$$
The fractional $L^p$-based Sobolev spaces, $W^{s,p}(\Omega^d)$, are 
$$
W^{s,p}=\left\{f\in L^p(\Omega^d), \pax^{\lfloor s\rfloor} f\in L^p(\Omega^d), \frac{|\pax^{\lfloor s\rfloor}f(x)-\pax^{\lfloor s\rfloor}f(y)|}{|x-y|^{\frac{d}{p}+(s-\lfloor s\rfloor)}}\in L^p(\Omega^d\times\Omega^d)\right\},
$$
with norm
$$
\|f\|_{W^{s,p}}^p=\|f\|_{L^p}^p+\|f\|_{\dot{W}^{s,p}}^p, 
$$
where
$$
\|f\|_{\dot{W}^{s,p}}^p=\|\pax^{\lfloor s\rfloor} f\|^p_{L^p}+\int_{\Omega^d}\int_{\Omega^d}\frac{|\pax^{\lfloor s\rfloor}f(x)-\pax^{\lfloor s\rfloor}f(y)|^p}{|x-y|^{d+(s-\lfloor s \rfloor)p}}dxdy.
$$

\subsubsection{Entropy functionals}
For a given function $u\geq0$, we define the following entropy functionals
\begin{equation}\label{LlogL}
\mathcal{F}_1[u]=\int_{\Omega^d}\left(u(x)\log(u(x))-u(x)+1\right)dx,
\end{equation}
\begin{equation}\label{(1+L)log(1+L)}
\mathcal{F}_2[u]=\int_{\Omega^d}(1+u(x))\log(1+u(x))dx.
\end{equation}
These two entropies have an associated Fisher information:
\begin{equation}\label{FisherLlogL}
\mathcal{I}^\alpha_1[u]=\int_{\Omega^d}\Lambda^\alpha u(x)\log(u(x))dx,
\end{equation}
\begin{equation}\label{Fisher(1+L)log(1+L)}
\mathcal{I}^\alpha_2[u]=\int_{\Omega^d}\Lambda^\alpha u(x)\log(1+u(x))dx.
\end{equation}
The third entropy that we are using reads
\begin{equation}\label{LlogL2}
\mathcal{F}_3[u]=\int_{\TT}u(x)\log\left(u(x)^2+M(1-u(x))^2\right)dx,
\end{equation}
with its Fisher information
\begin{equation}\label{FisherLlogL2}
\mathcal{I}^\alpha_3[u]=\int_\TT\Lambda^\alpha u\log\left(u^2+M(1-u)^2\right)dx+2(M+1)\int_{\TT}\frac{\Lambda^{\alpha}uu^2}{u^2+M(1-u)^2}dx.
\end{equation}

\subsubsection{Notation} Recall that we denote the mean of a function by 
$$
\langle u\rangle=\frac{1}{2\pi}\int_\TT u(y)dy.
$$
Let us introduce $f$ and $a$ as follows
\begin{equation}\label{eqf}
f(u(x,t))=\frac{u(x,t)^2}{u(x,t)^2+M(1-u(x,t))^2},
\end{equation}
and
\begin{equation}\label{eqa}
a(u(x,t))=\frac{df}{du}=\frac{2u(x,t)M(1-u(x,t))}{(u(x,t)^2+M(1-u(x,t))^2)^2}.
\end{equation}
Finally, let us introduce the notation for the mollifiers. For $\epsilon>0$, we write $\mathcal{J}_\epsilon$ for the heat kernel at time $t=\epsilon$ and define 
$$
\jeps f=\mathcal{J}_\epsilon f.
$$
\subsubsection{Weak solutions to \eqref{BL} and their local existence}

We start this section with %our notion of weak solution for the diffusive problem \eqref{BL} with $\mu=0$:
\begin{defi}
Let $\mu, \nu \ge 0$ and $0<T<\infty$ be a fixed positive parameter. The function 
$$
u\in L^\infty([0,T)\times \Omega)
$$
 is a (very weak) solution of \eqref{BL} if
$$
\int_0^T\int_\Omega [\pat\phi-\nu\Lambda^\alpha\phi + \mu \Lambda^\beta \pat \phi]u +\pax\phi\left[\frac{u^2}{u^2+M(1-u)^2}\right] = \int_\Omega [\mu  \Lambda^\beta \pat \phi + \phi]_{|{t=0}} \, u_0,
$$
for every test function $\phi(x,t)\in C^\infty_c([0,T)\times\Omega)$. 

If a solution $u$ verifies the previous definition for every $0<T<\infty$, it is called a global solution.
\end{defi}

\begin{lem}\label{lem1}
Let $u_0\in H^s$, $s\geq2$ be the initial data for \eqref{BL} with $\mu, \nu \geq0$ and $0\leq \alpha,\beta\leq 2$. Then there exist $T(u_0,M)$ and the unique solution 
$$
u\in C([0,T(u_0,M)],H^s(\Omega))
$$
to \eqref{BL}. The maximal time of existence $T(u_0,M)$ is characterized by
\[\limsup_{t \to T(u_0,M)} \|u (t)\|_{\dot{W}^{1,\infty}(\Omega)} =  \infty.\]
\end{lem}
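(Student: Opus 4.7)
The plan is the standard mollification / energy method / continuation criterion package, adapted to the mixed dissipative-conservative regularization. First I would set up an approximating problem by replacing $u$ with $\jeps u$ inside the nonlinearity (and possibly inside the fractional derivatives), and then invert the Fourier multiplier $(I + \mu\Lambda^\beta)$, which is bounded on $H^s$ for every $s$ and smoothing of order $\beta$. This converts \eqref{BL} into an ODE $\pat \ueps = F_\epsilon(\ueps)$ on the Banach space $H^s(\Omega)$, with $F_\epsilon$ Lipschitz (thanks to the mollifier and to the fact that the denominator $u^2 + M(1-u)^2 \ge M/(1+M)$ is bounded below on all of $\RR$, so $f$ is $C^\infty_b$); Picard-Lindelöf then yields a local-in-time solution $\ueps \in C([0,T_\epsilon], H^s)$.

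Next I would prove an energy estimate uniform in $\epsilon$. Applying $\Lambda^s$ to the (mollified) equation, pairing with $\Lambda^s \ueps$ and integrating gives
\begin{equation*}
\frac{1}{2}\frac{d}{dt}\bigl(\|\ueps\|_{\dot H^s}^2 + \mu\|\ueps\|_{\dot H^{s+\beta/2}}^2\bigr) + \nu\|\ueps\|_{\dot H^{s+\alpha/2}}^2 = -\int \Lambda^s \pax f(\jeps\ueps)\,\Lambda^s \ueps\, dx.
\end{equation*}
For the nonlinear right-hand side I would write $\pax f(v) = f'(v)\pax v$ and split via the Kato-Ponce commutator: the diagonal piece $\int f'(v) \Lambda^s \pax v \, \Lambda^s v$ integrates by parts to produce $\tfrac12 \int f''(v)\pax v\,(\Lambda^s v)^2$, and the commutator $[\Lambda^s, f'(v)]\pax v$ is controlled in $L^2$ by $\|\pax v\|_{L^\infty}\|f'(v)\|_{\dot H^s} + \|\nabla f'(v)\|_{L^\infty}\|v\|_{\dot H^s}$. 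Combining these with Moser's composition estimate $\|f'(v)\|_{\dot H^s} \le C(M,\|v\|_{L^\infty}) \|v\|_{\dot H^s}$ (available because $f\in C^\infty_b$), I would arrive at a bound of the schematic shape
\begin{equation*}
\frac{d}{dt}\bigl(\|\ueps\|_{H^s}^2 + \mu\|\ueps\|_{\dot H^{s+\beta/2}}^2\bigr) + 2\nu\|\ueps\|_{\dot H^{s+\alpha/2}}^2 \le C(M)\,\bigl(1+\|\ueps\|_{W^{1,\infty}}\bigr)\,\|\ueps\|_{H^s}^2.
\end{equation*}
Since $s\ge 2 > 3/2$, Sobolev embedding gives $\|\ueps\|_{W^{1,\infty}} \le C\|\ueps\|_{H^s}$, so Gronwall on $\|\ueps\|_{H^s}^2$ yields a common existence time $T(u_0,M)$ and a uniform bound, independently of $\epsilon$.

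Then I would pass to the limit: the uniform bound in $L^\infty_T H^s$ and a companion bound $\pat\ueps \in L^2_T H^{s-2}$ read off from the equation allow Aubin-Lions to extract a subsequence converging in $C([0,T];H^{s-\delta})$ to a function $u$, which by lower semicontinuity lies in $L^\infty_T H^s \cap L^2_T H^{s+\alpha/2}$ and solves \eqref{BL}. Continuity in time in the strong $H^s$-norm follows by a standard weak-strong argument. Uniqueness is obtained by subtracting two solutions $u_1,u_2$, applying $(I+\mu\Lambda^\beta)^{-1}$ and running a plain $L^2$-estimate, using that $f$ is globally Lipschitz.

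For the blow-up criterion, the contrapositive is the natural route: if $\limsup_{t\to T}\|u(t)\|_{\dot W^{1,\infty}} < \infty$, then the same Gronwall inequality extracted above shows $\|u(t)\|_{H^s}$ stays bounded on $[0,T)$, so the local existence result may be applied with initial time slightly less than $T$ to produce an extension past $T$, contradicting maximality. The main obstacle I anticipate is the joint bookkeeping for the $\mu\Lambda^\beta\pat u$ term in the mollified scheme — specifically, ensuring that the inversion of $I+\mu\Lambda^\beta$ commutes properly with the mollifier and delivers uniform-in-$\epsilon$ estimates — together with calibrating the Kato-Ponce/Moser estimates so that only $\|u\|_{W^{1,\infty}}$ (not $\|u\|_{C^2}$) appears in the continuation criterion.
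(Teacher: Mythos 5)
Your argument is correct in outline and is considerably more self-contained than the paper's own proof, which is essentially a citation: the authors regularize \eqref{BL} by adding artificial viscosity terms $\epsilon\pax^2 u_{\epsilon,\delta}+\delta\pax^2\pat u_{\epsilon,\delta}$, invoke Theorem 3.1 of Hong, Wu \& Yuan for solvability of that regularized system, and then pass to the limit $\epsilon,\delta\to0$ via ``standard energy estimates'', with the continuation criterion likewise left to ``energy methods''. You instead build the local solution from scratch: mollify the nonlinearity (and, as you rightly anticipate, the dissipative term as well, since $(I+\mu\Lambda^\beta)^{-1}\Lambda^\alpha$ is unbounded on $H^s$ when $\mu=0$ or $\beta<\alpha$, so the unmollified vector field is not an $H^s$-valued ODE), invert $I+\mu\Lambda^\beta$, run Picard--Lindel\"of, and close with the Kato--Ponce/Moser energy estimate. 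That energy estimate is exactly the one the paper's proof implicitly relies on, and your bookkeeping showing that only $\|u\|_{W^{1,\infty}}$ (not higher norms) appears on the right-hand side is precisely what produces the stated blow-up criterion. The trade-off is clear: the paper's route outsources the local theory to a reference at the cost of verifiability; yours is longer but checkable line by line, and the use of $f'(0)=0$ (so Moser's composition estimate applies on $\RR$ as well as on $\TT$) is a point worth making explicit.

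One step to tighten: in the uniqueness argument, ``$f$ globally Lipschitz'' alone gives $\int(f(u_1)-f(u_2))\pax w\,dx\leq L\|w\|_{L^2}\|\pax w\|_{L^2}$ for $w=u_1-u_2$, which does not close at the $L^2$ level when $\alpha<2$ and $\mu\beta<2$. Instead write $f(u_1)-f(u_2)=g\,w$ with $g=\int_0^1 f'(\theta u_1+(1-\theta)u_2)\,d\theta$, so that $\int g\,w\,\pax w\,dx=-\tfrac12\int \pax g\,w^2\,dx$, which is controlled by $\|\pax u_1\|_{L^\infty}+\|\pax u_2\|_{L^\infty}$; this is finite because both solutions lie in $C([0,T];H^s)$ with $s\geq2$. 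With that correction Gronwall closes and uniqueness follows.
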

\begin{proof}
Using the same ideas as in \cite[Theorem 3.1]{HongWuYuan}, we can construct solutions to the regularized problem
$$
\pat u_{\epsilon,\delta}+\pax \left[\frac{u_{\epsilon,\delta}^2}{u_{\epsilon,\delta}^2+M(1-u_{\epsilon,\delta})^2}\right]=-\nu\Lambda^\alpha u_{\epsilon,\delta}-\mu\Lambda^\beta \pat u_{\epsilon,\delta}+\epsilon \pax^2u_{\epsilon,\delta}+\delta\pax^2\pat u_{\epsilon,\delta},
$$
with initial data $u_{\epsilon,\delta}(0)=u_0$. Standard energy estimates give us uniform bounds. Then we can pass to the limits $\epsilon,\delta\rightarrow0$. The proof of the continuation criteria can be obtained by energy methods.
\end{proof}

\section{The entropy inequalities}\label{sec2}
In this section we provide the proof of three entropy inequalities that, in our opinion, may be of independent interest.
\begin{prop}\label{prop1}
Let $u$ be a given function and $0<\alpha<2$, $0<\epsilon<\alpha/2$ be two fixed constants. Then
\begin{equation}\label{entropyp1}
\|u\|_{\dot{W}^{\alpha/2-\epsilon,1}}^2\leq C(\alpha,d,\epsilon)\|u\|_{L^1}\mathcal{I}^{\alpha}_i[u],
\end{equation}
\begin{equation}\label{entropypinfty}
\|u\|_{\dot{H}^{\alpha/2}}^2\leq \frac{4}{\mathcal{C}_{\alpha,d}}\|u\|_{L^\infty}\mathcal{I}^{\alpha}_i[u],
\end{equation}
provided that the right hand sides are meaningful.
\end{prop}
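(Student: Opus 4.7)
The plan is to convert both sides of each inequality into Gagliardo-style symmetric double integrals via the kernel representations \eqref{1}--\eqref{2} of $\Lambda^\alpha$, and then reduce the bounds to pointwise convexity estimates on the integrand. Symmetrizing in $x \leftrightarrow y$ yields
\[
\mathcal{I}^\alpha_1[u] = \frac{\mathcal{C}_{\alpha,d}}{2} \int\!\!\int \frac{(u(x)-u(y))(\log u(x) - \log u(y))}{|x-y|^{d+\alpha}}\, dxdy,
\]
with an analogous expression for $\mathcal{I}^\alpha_2[u]$ using $\log(1+\cdot)$ in place of $\log$, while $\|u\|^2_{\dot{H}^{\alpha/2}}$ has the corresponding representation with $(u(x)-u(y))^2$ in the numerator. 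Everything then lives under a common double integral, and the inequalities reduce to pointwise estimates on the integrands.

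For \eqref{entropypinfty}, I would invoke the elementary convexity inequality
\[
(a-b)(\log a - \log b) \geq \frac{(a-b)^2}{\max(a,b)} \qquad (a,b > 0),
\]
which follows at once from $\log a - \log b = \int_b^a t^{-1}\, dt \geq |a-b|/\max(a,b)$; for $i=2$ the analogous bound replaces $\max(a,b)$ by $1+\max(a,b)$. Bounding $\max(u(x),u(y)) \leq \|u\|_{L^\infty}$ pointwise and recognizing the remaining double integral as the Gagliardo form of $\|u\|^2_{\dot{H}^{\alpha/2}}$ delivers \eqref{entropypinfty}, the constant $4/\mathcal{C}_{\alpha,d}$ absorbing the symmetrization factors.

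For \eqref{entropyp1}, I would apply Cauchy--Schwarz to the Gagliardo integrand of $\|u\|_{\dot{W}^{\alpha/2-\epsilon,1}}$:
\begin{align*}
\int\!\!\int \frac{|u(x)-u(y)|}{|x-y|^{d+\alpha/2-\epsilon}}\, dxdy
&\leq \left(\int\!\!\int \frac{(u(x)-u(y))(\log u(x) - \log u(y))}{|x-y|^{d+\alpha}}\, dxdy\right)^{1/2}\\
&\quad \times \left(\int\!\!\int \frac{|u(x)-u(y)|}{|\log u(x)-\log u(y)|\, |x-y|^{d-2\epsilon}}\, dxdy\right)^{1/2}.
\end{align*}
The first factor equals $\sqrt{2\mathcal{I}^\alpha_1[u]/\mathcal{C}_{\alpha,d}}$. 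For the second I would use the reverse elementary bound $|a-b|/|\log a - \log b| \leq \max(a,b) \leq a+b$ (checked, as above, by monotonicity of $t \mapsto 1/t$), reducing matters to
\[
\int\!\!\int \frac{u(x)+u(y)}{|x-y|^{d-2\epsilon}}\, dxdy \leq 2\|u\|_{L^1}\, \sup_x \int \frac{dy}{|x-y|^{d-2\epsilon}}.
\]
Squaring then yields \eqref{entropyp1}, with $C(\alpha,d,\epsilon)$ absorbing the final supremum.

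The main obstacle is precisely that last supremum. On $\TT^d$ it is a finite constant that blows up as $\epsilon \to 0$ (explaining the $\epsilon$-dependence of $C$), so the Cauchy--Schwarz argument goes through verbatim. On $\RR^d$ the supremum is infinite, and in fact a homogeneity check with $u_\lambda(x)=u(\lambda x)$ shows that the ratio of the two sides of \eqref{entropyp1} scales like $\lambda^{-2\epsilon}$, so the bound cannot hold uniformly in the stated form. To recover the full-space case one would instead split the double integral into a near-field piece $|x-y|<\rho$ (still controlled by Cauchy--Schwarz, producing a factor $\rho^{2\epsilon}$) and a far-field piece $|x-y|\geq \rho$ (handled directly via $|u(x)-u(y)|\leq u(x)+u(y)$, producing a $\|u\|_{L^1}\rho^{-\alpha/2+\epsilon}$ tail), and then optimize in $\rho$. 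This optimization is the one step that goes beyond routine pointwise manipulation.
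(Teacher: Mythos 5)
Your argument is essentially the paper's own proof. For \eqref{entropypinfty} both you and the paper use the symmetrized Gagliardo form of $\mathcal{I}^\alpha_1[u]$ together with the pointwise bound $(a-b)(\log a-\log b)\ge (a-b)^2/\max(a,b)$; for \eqref{entropyp1} both apply Cauchy--Schwarz with the logarithmic mean as weight --- the paper writes that weight through the integral representations $\int_0^1 (su(x)+(1-s)u(y))^{\pm1}\,ds$, which are exactly your $(u(x)-u(y))/(\log u(x)-\log u(y))$ followed by the bound by the arithmetic mean, so the two computations coincide term by term. Your closing observation is correct and worth recording: the paper's constant is $\mathcal{C}(\epsilon)=\int_{\TT^d}|y|^{2\epsilon-d}\,dy$, so its proof of \eqref{entropyp1} is genuinely a torus statement, and your scaling check with $u_\lambda(x)=u(\lambda x)$ shows that \eqref{entropyp1} cannot hold verbatim on $\RR^d$ --- unlike \eqref{entropypinfty}, which is scale-invariant and is the only one of the two estimates actually invoked later, in Section \ref{sec3}.
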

\begin{proof}
Let us fix $i=1$. First we symmetrize
\begin{eqnarray*}
\mathcal{I}^\alpha_1[u]&=&\mathcal{C}_{\alpha,d}\sum_{\gamma\in\mathbb{Z}^d}\int_{\TT^d}
\text{P.V.}\int_{\TT^d}\frac{u(x)-u(y)}{|x-y-2\pi\gamma|^{d+\alpha}}\log(u(x))dydx\\
&=&-\mathcal{C}_{\alpha,d}\sum_{\gamma\in\mathbb{Z}^d}\int_{\TT^d}
\text{P.V.}\int_{\TT^d}\frac{u(x)-u(y)}{|x-y-2\pi\gamma|^{d+\alpha}}\log(u(y))dydx\\
&=&\frac{\mathcal{C}_{\alpha,d}}{2}\sum_{\gamma\in\mathbb{Z}^d}\int_{\TT^d}
\text{P.V.}\int_{\TT^d}\frac{u(x)-u(y)}{|x-y-2\pi\gamma|^{d+\alpha}}\log\left(\frac{u(x)}{u(y)}\right)dydx\\
&\geq&0.
\end{eqnarray*}
Furthermore, since $(a-b) \log \left(\frac{a}{b} \right)\geq0$, every term in the series is positive, \emph{i.e.} for every $\gamma\in\mathbb{Z}^d$, we have
$$
\int_{\TT^d}
\text{P.V.}\int_{\TT^d}\frac{u(x)-u(y)}{|x-y-2\pi\gamma|^{d+\alpha}}\log\left(\frac{u(x)}{u(y)}\right)dydx\geq0.
$$
In particular
\begin{equation}\label{lower}
\mathcal{I}^\alpha_1[u]\geq \frac{\mathcal{C}_{\alpha,d}}{2}\int_{\TT^d}
\text{P.V.}\int_{\TT^d}\frac{u(x)-u(y)}{|x-y|^{d+\alpha}}\log\left(\frac{u(x)}{u(y)}\right)dydx
\end{equation}
Let us consider first the case $0\leq u\in L^1$. We have
\begin{eqnarray*}
\|u\|_{\dot{W}^{\alpha/2-\epsilon,1}}&=&\int_{\TT^d}\int_{\TT^d}\frac{|u(x)-u(y)|}{|x-y|^{d+\frac{\alpha}{2}-\epsilon}}dxdy\\
&=&\int_{\TT^d}\int_{\TT^d}\int_0^1 \bigg[\frac{|u(x)-u(y)|}{|x-y|^{d+\frac{\alpha}{2}-\epsilon}}\frac{|x-y|^{-\frac{d}{2}+\epsilon}}{|x-y|^{-\frac{d}{2}+\epsilon}}\\
&&\times \frac{|su(x)+(1-s)u(y)|^{1/2}}{|su(x)+(1-s)u(y)|^{1/2}}\bigg]dsdxdy\\
&\leq& I_1^{0.5}I_2^{0.5},
\end{eqnarray*}
with
$$
I_1=\int_{\TT^d}\int_{\TT^d}\int_0^1 \frac{|u(x)-u(y)|^2}{|x-y|^{d+\alpha}}\frac{1}{|su(x)+(1-s)u(y)|}dsdxdy,
$$
$$
I_2=\int_{\TT^d}\int_{\TT^d}\int_0^1 \frac{|su(x)+(1-s)u(y)|}{|x-y|^{d-2\epsilon}}dsdxdy.
$$
This latter integral is similar to the Riesz potential. Due to the positivity of $u$, we have
\begin{eqnarray*}
I_2&=&\int_{\TT^d}\int_{\TT^d}\int_0^1 \frac{su(x)+(1-s)u(y)}{|x-y|^{d-2\epsilon}}dsdxdy\\
&=&\mathcal{C}(\epsilon)\|u\|_{L^1},
\end{eqnarray*}
with
$$
\mathcal{C}(\epsilon)=\int_{\TT^d}\frac{1}{|y|^{d-2\epsilon}}dy.
$$
We have
$$
I_1=\int_{\TT^d}
\text{P.V.}\int_{\TT^d}\frac{u(x)-u(y)}{|x-y|^{d+\alpha}}\log\left(\frac{u(x)}{u(y)}\right)dydx.
$$
Consequently, we get
$$
\|u\|_{\dot{W}^{\alpha/2-\epsilon,1}}^2\leq 2\frac{\mathcal{C}(\epsilon)}{\mathcal{C}_{\alpha,d}}\|u\|_{L^1}\mathcal{I}^{\alpha}_1[u].
$$
The case $0\leq u\in L^\infty$ was first proved by Bae \& Granero-Belinch\'on \cite{bae2015global}. For the sake of completeness, we include here a sketch of the proof. Using \eqref{lower}, we have
$$
\mathcal{I}^\alpha_1[u]\geq \frac{\mathcal{C}_{\alpha,d}}{2}\frac{1}{2\|u\|_{L^\infty}}\int_{\TT^d}\int_{\TT^d}\frac{|u(x)-u(y)|^2}{|x-y|^{d+\alpha}}dxdy,
$$
so
$$
\|u\|_{\dot{H}^{\alpha/2}}^2\leq \frac{4}{\mathcal{C}_{\alpha,d}}\|u\|_{L^\infty}\mathcal{I}^{\alpha}_1[u].
$$
The proof for the case $i=2$ is similar.
\end{proof}

\section{Proof of Proposition \ref{teo1}: Weak solutions}\label{sec3}
%\begin{proof}[Proof of Proposition  \ref{teo1}] 

We prove the result for $\Omega=\TT$, but the same proof can be adapted to deal with $\Omega=\RR$. We consider the regularized problems
\begin{equation}\label{BLreg}
\pat u_\epsilon+\pax \left[\frac{u_\epsilon^2}{u_\epsilon^2+M(1+\epsilon-u_\epsilon)^2}\right]=-\nu\Lambda^\alpha u_\epsilon+\epsilon\pax^2 u_\epsilon,\, x\in\TT,\,t>0, \end{equation}
with the regularized initial data
$$
u_\epsilon(x,0)=\jeps u_0(x)+\epsilon,
$$
and $\epsilon \le 1/2$. These approximate problems have global classical solution due to the Theorem 3.1 in \cite{HongWuYuan}. Consequently, we focus on obtaining the appropriate $\epsilon$-uniform bounds. By assumption $0\leq u(x,0)\leq 1$. We apply the same technique as C\'ordoba \& C\'ordoba \cite{cor2} (see also \cite{CC,c-g09,ccgs-10,bae2015global,AGM,G,GH,GNO,BG,GO} for more details and application to other partial differential equations), \emph{i.e.} we track
$$
\|u_\epsilon(t)\|_{L^\infty}=u_\epsilon(\overline{x}_t,t)=\mathcal{M}_\epsilon(t)
$$
and 
$$
\min_{x \in\Omega} u_\epsilon(x, t)=u_\epsilon(\underline{x}_t,t)=\mathfrak{M}_\epsilon(t).
$$
Due to smoothness of $u_\epsilon$, we have that $\mathcal{M}_\epsilon(t), \mathfrak{M}_\epsilon(t)$ are Lipschitz, and consequently almost everywhere differentiable. Hence, using 
$$
\mathcal{M}_\epsilon'(t)=\frac{d}{dt}\|u(t)\|_{L^\infty}=\frac{d}{dt} (u(\overline{x}_t,t)), \qquad \mathfrak{M}_\epsilon'(t) =\frac{d}{dt} (u_\epsilon(\underline{x}_t,t)
$$

together with the kernel expression for $\Lambda^\alpha$, we have the $\epsilon$-uniform bounds
$$
0\leq\epsilon\leq u_\epsilon(x,t)\leq 1.5.
$$
By space integration of \eqref{BLreg} we get
$$
\|u_\epsilon(t)\|_{L^1}=\|u_\epsilon(0)\|_{L^1}\leq \|u_0\|_{L^1}+\pi.
$$
We compute
\begin{eqnarray*}
\frac{d}{dt}\mathcal{F}_1[u_\epsilon]&=&\int_\TT \pat u_\epsilon\log(u_\epsilon)dx\\
&=&\int_\TT\frac{u_\epsilon\pax u_\epsilon}{u_\epsilon^2+M(1+\epsilon-u_\epsilon)^2}dx-\nu\mathcal{I}_1^\alpha[u_\epsilon]
-\epsilon\mathcal{I}_1^2[u_\epsilon].
\end{eqnarray*}
We have
$$
\pax\log\left(u_\epsilon^2+M(1+\epsilon-u_\epsilon)^2\right)=\frac{2u_\epsilon\pax u_\epsilon(1+M)-2M(1+\epsilon)\pax u_\epsilon}{u_\epsilon^2+M(1+\epsilon-u_\epsilon)^2},
$$
and, as a consequence
\begin{eqnarray*}
\int_\TT\left[\frac{u_\epsilon\pax u_\epsilon}{u_\epsilon^2+M(1+\epsilon-u_\epsilon)^2}\right]dx&=&\frac{M(1+\epsilon)}{M+1}\int_\TT\frac{\pax u_\epsilon}{u_\epsilon^2+M(1+\epsilon-u_\epsilon)^2}dx\\
&=&\frac{M(1+\epsilon)}{M+1}\int_\TT\frac{\pax u_\epsilon}{u_\epsilon^2}\frac{1}{1+M\left(\frac{1+\epsilon-u_\epsilon}{u_\epsilon}\right)^2}dx\\
&=&\frac{-M}{M+1}\int_\TT\frac{\pax\left[\frac{1+\epsilon-u_\epsilon}{u_\epsilon}\right]}{1+M\left(\frac{1+\epsilon-u_\epsilon}{u_\epsilon}\right)^2}dx\\
&=&\frac{-\sqrt{M}}{M+1}\int_\TT \pax\left[ \arctan \left( \sqrt{M} \left(\frac{1+\epsilon-u_\epsilon}{u_\epsilon}\right) \right) \right] dx\\
&=&0.
\end{eqnarray*}
Thus, we conclude
$$
\mathcal{F}_1[u_\epsilon(t)]+\nu\int_0^t\mathcal{I}_1^\alpha[u_\epsilon(s)]ds
+\epsilon\int_0^t\mathcal{I}_1^2[u_\epsilon(s)]ds=\mathcal{F}_1[u_\epsilon(0)].
$$
As 
$$
\mathcal{F}_1[u_\epsilon(0)]\leq C,
$$
we get a $\epsilon$-uniform estimate. Now we apply \eqref{entropypinfty} from Proposition \ref{prop1} and $u_\epsilon(x,t)\leq 1.5$ to get
$$
\int_{0}^t\|u_\epsilon(s)\|_{H^{\alpha/2}}^2ds\leq C.
$$
Consequently, we have $\epsilon$-uniform bounds
$$
u_\epsilon(t)\in L^\infty(0,\infty;L^\infty)\cap L^2(0,\infty;H^{\alpha/2}).
$$
and the first entropy inequality \eqref{eq:enA}. For the second entropy inequality \eqref{eq:enB}, we compute
\begin{eqnarray*}
\frac{d}{dt}\mathcal{F}_3[u]&=&\int_{\TT}\pat u\log\left(u^2+M(1-u)^2\right)dx\\
&&+\int_{\TT}2\frac{u^2\pat u+uM(1-u)(-\pat u)}{u^2+M(1-u)^2}dx\\
&=&I_1+I_2
\end{eqnarray*}
We are going to handle both integrals separately. We have
\begin{eqnarray*}
I_1&=&\int_\TT\frac{u^2}{u^2+M(1-u)^2}\frac{2u\pax u+2M(1-u)(-\pax u)}{u^2+M(1-u)^2}dx\\
&&-\nu\int_\TT\Lambda^\alpha u\log\left(u^2+M(1-u)^2\right)dx\\
&=& \int_\TT\frac{2u\pax u}{u^2+M(1-u)^2}dx-\nu\int_\TT\Lambda^\alpha u\log\left(u^2+M(1-u)^2\right)dx\\
&=&-\nu\int_\TT\Lambda^\alpha u\log\left(u^2+M(1-u)^2\right)dx,
\end{eqnarray*}
where in the second term we have used
$$
\frac{2u\pax u+2M(1-u)(-\pax u)}{(u^2+M(1-u)^2)^2}=-\pax\left(\frac{1}{u^2+M(1-u)^2}\right).
$$
The second integral reads
\begin{eqnarray*}
I_2&=&-2\nu(M+1)\int_{\TT}\frac{\Lambda^{\alpha}uu^2}{u^2+M(1-u)^2}dx-M\int_\TT\frac{2u\pat u}{u^2+M(1-u)^2}dx.
\end{eqnarray*}
Now we compute
$$
\frac{d}{dt}\int_\TT\log\left(u^2+M(1-u)^2\right)dx=2\int_\TT\frac{(1+M)u\pat u-M\pat u}{u^2+M(1-u)^2}dx,
$$
so
\begin{multline*}
\int_\TT\frac{2 u\pat u}{u^2+M(1-u)^2}dx=\frac{1}{1+M}\left[\frac{d}{dt}\int_\TT\log\left(u^2+M(1-u)^2\right)dx\right.\\
\left.-2\sqrt{M}\frac{d}{dt}\int_\TT\arctan\left(\sqrt{M}\left(\frac{1}{u}-1\right)\right)dx\right]
\end{multline*}
Collecting all these computations we get
\begin{multline*}
\mathcal{F}_3[u(t)]+\nu\int_0^t\mathcal{I}^\alpha_3[u(s)]ds+\frac{M}{1+M}\left[\int_\TT\log\left(u(t)^2+M(1-u(t))^2\right)dx\right.\\
\left.-2\sqrt{M}\int_\TT\arctan\left(\sqrt{M}\left(\frac{1}{u(t)}-1\right)\right)dx\right]\\
=\mathcal{F}_3[u_0]+\frac{M}{1+M}\left[\int_\TT\log\left(u_0^2+M(1-u_0)^2\right)dx\right.\\
\left.-2\sqrt{M}\int_\TT\arctan\left(\sqrt{M}\left(\frac{1}{u_0}-1\right)\right)dx\right].
\end{multline*}
%\end{proof}

\section{Proof of Proposition \ref{teo1b}: Decay estimates}

%\begin{proof}[Proof of Proposition \ref{teo1b}]
Let us prove first the periodic case. The $L^1$ norm is preserved. Consequently, the mean propagates. We again apply the technique of tracking $\mathcal{M}(t)$ and $\mathfrak{M}(t)$. Recall 
$$
\|u(t)\|_{L^\infty}=u(\overline{x}_t,t)=\mathcal{M}(t).
$$
The smoothness needed to proceed with $\mathcal{M}(t)'$ is, for this proposition, an assumption.

Since $x, y \in\TT$,  $|x-y|^{1+\alpha}\leq (2\pi)^{1+\alpha}$. Hence, using \eqref{1} and \eqref{normalizingconstant}, we get
$$
\Lambda^\alpha u(\overline{x}_t)\geq \frac{2\Gamma(1+\alpha)\cos((1-\alpha)\pi/2)}{(2\pi)^{1+\alpha}}(u(\overline{x}_t,t)-\langle u_0\rangle).
$$
Consequently,
\begin{align*}
\mathcal{M}'(t)& =\frac{d}{dt}\left(u(\overline{x}_t,t)-\langle u_0\rangle\right)\\
& =-\Lambda^\alpha u(\overline{x}_t)\\
&\leq -\frac{2\Gamma(1+\alpha)\cos((1-\alpha)\pi/2)}{(2\pi)^{1+\alpha}}(u(\overline{x}_t,t)-\langle u_0\rangle).
\end{align*}
Integrating this ODI, we have
$$
\|u(t)\|_{L^\infty(\TT)}\leq \langle u_0\rangle+(\|u_0\|_{L^\infty}-\langle u_0\rangle)e^{-\frac{2\Gamma(1+\alpha)\cos((1-\alpha)\pi/2)}{(2\pi)^{1+\alpha}}t}.
$$
Let us turn our attention to the flat at infinity case. Again the $L^1$ norm propagates. We take a positive number $r>0$  (that will be specified below) and define 
$$
\mathcal{U}_1=\{\eta\in B(0,r) \;s.t.\; u(\overline{x}_t)-u(\overline{x}_t-\eta)>u(\overline{x}_t,t)/2 \},
$$
and $\mathcal{U}_2=B(0,r)-\mathcal{U}_1$. We have
$$
\|u_0\|_{L^1}=\int_\RR|u(\overline{x}_t-y)| dy\geq \int_{\mathcal{U}_2}u(\overline{x}_t-y)dy\geq\frac{ u(\overline{x}_t)}{2}|\mathcal{U}_2|,
$$
so (for $u(\overline{x}_t) > 0$),
\begin{equation}\label{eqappaux}
-\frac{2\|u_0\|_{L^1}}{u(\overline{x}_t)}\leq -|\mathcal{U}_2|.
\end{equation}
\begin{eqnarray*}
\Lambda^\alpha u(\overline{x}_t)&=&c(\alpha)\text{P.V.}\int_\RR\frac{u(\overline{x}_t)-u(\overline{x}_t-y)}{|y|^{1+\alpha}}dy\\
&\geq& c(\alpha)\text{P.V.}\int_{\mathcal{U}_1}\frac{u(\overline{x}_t)-u(\overline{x}_t-y)}{|y|^{1+\alpha}}dy\\
%&\geq& c(\alpha)\frac{u(\overline{x}_t)}{2r^{1+\alpha}}|\mathcal{U}_1|\\
%&\geq& c(\alpha)\frac{u(\overline{x}_t)}{2r^{1+\alpha}}\left(2r-|\mathcal{U}_2|\right)\\
&\geq& c(\alpha)\frac{u(\overline{x}_t)}{2r^{1+\alpha}}\left(2r-\frac{2\|u_0\|_{L^1}}{u(\overline{x}_t)}\right).
\end{eqnarray*}
We choose now 
$$
r=\frac{2\|u_0\|_{L^1}}{u(\overline{x}_t)},
$$
thus, recalling \eqref{normalizingconstant}, we have that
$$
\Lambda^\alpha u(\overline{x}_t)\geq \frac{\mathcal{C}_{\alpha,1}}{2^{1+ \alpha}}\frac{u(\overline{x}_t)^{1+\alpha}}{\|u_0\|^{\alpha}_{L^1}}
$$
for both $u(\overline{x}_t) > 0$ and $u(\overline{x}_t) = 0$.
With the same argument as in the periodic case, we get
$$
\frac{d}{dt}\|u(t)\|_{L^\infty}\leq -\frac{\mathcal{C}_{\alpha,1}}{2^{1+ \alpha}}\frac{\|u(t)\|_{L^\infty}^{1+\alpha}}{\|u_0\|^{\alpha}_{L^1}},
$$
thus, using explicit value of $\mathcal{C}_{\alpha,1}$ we arrive at
\begin{align*}
\|u(t)\|_{L^\infty} & \leq  \frac{\|u_0\|_{L^\infty}}{\left(1+\alpha \frac{\Gamma(1+\alpha)\cos((1-\alpha)\pi/2)}{2^{1+\alpha} \pi} \frac{\|u_0\|_{L^\infty}^{\alpha}}{ \|u_0\|^{\alpha}_{L^1} }t \right)^{\frac{1}{\alpha}}}.
\end{align*}
%\end{proof}

\section{Proof of Theorem \ref{teo2}: Global solutions for $\max\{\alpha,\beta\}>1$}
Equipped with the Lemma \ref{lem1} and its proof, we can focus on the appropriate energy estimates that ensures global existence (rigorously, we should do this on the level of the regularized problem)
%\begin{proof}[Proof of Theorem \ref{teo2}]

Notice that we also have a global bound
\begin{equation}\label{L^2}
\|u(t)\|_{L^2}^2+\mu\|u(t)\|^2_{\dot{H}^{\beta/2}}+ 2\nu\int_0^t\|u(s)\|_{\dot{H}^{\alpha/2}}^2ds\leq\|u_0\|_{L^2}+\mu\|u_0\|^2_{\dot{H}^{\beta/2}}.
\end{equation}
We split the proof in three parts: the first one is devoted to the proof of the purely parabolic case $\mu=0$. Then, in step 2 and 3 we consider the cases $\mu>0 $ $\beta<1<\alpha$ and $\mu>0 $ $\alpha<1<\beta$, respectively.

\textbf{Step 1: Case $\mu=0$.} We perform the estimates for $s=1$, the case $s>1$ being analogous. Testing \eqref{BL} against $\Lambda u$ and using the self-adjointness, we have that
\begin{equation}\label{4}
\frac{1}{2}\frac{d}{dt}\|u\|^2_{\dot{H}^{0.5}}=I_1+I_2,
\end{equation}
with
\begin{align}
I_1&=-\int_\Omega \left(\frac{2u}{u^2+M(1-u)^2}-\frac{u^2(2u-2M(1-u))}{\left(u^2+M(1-u)^2\right)^2}\right)\pax u\Lambda udx \label{5}\\
I_2&=-\nu\|u\|_{\dot{H}^{\frac{1+\alpha}{2}}}^2.\label{6}
\end{align}
Due to H\"{o}lder's inequality
\begin{equation}\label{7}
I_1\leq \|u\|_{\dot{H}^1}^2\left\|\frac{2u}{u^2+M(1-u)^2}-\frac{u^2(2u-2M(1-u))}{\left(u^2+M(1-u)^2\right)^2}\right\|_{L^\infty}.
\end{equation}
Using interpolation
$$
\|f\|_{\dot H^{1}}\leq c\|f\|_{\dot H^{0.5}}^{1-\frac{1}{\alpha}}\|f\|_{\dot H^{\frac{1+\alpha}{2}}}^{\frac{1}{\alpha}},
$$
and  \eqref{boundM} we have
$$
I_1\leq \|u\|_{\dot H^{0.5}}^{2-\frac{2}{\alpha}}\|u\|_{\dot{H}^{\frac{1+\alpha}{2}}}^{\frac{2}{\alpha}}c(u_0,M)\leq \|u\|_{H^{0.5}}^{2}c(u_0,M,\nu) + \nu \|u\|_{\dot{H}^{\frac{1+\alpha}{2}}}^{2}.
$$
We conclude
\begin{equation}\label{3}
\|u(t)\|_{H^{0.5}}\leq \|u_0\|_{H^{0.5}}e^{c(u_0,M,\nu)t}.
\end{equation}
Testing \eqref{BL} against $-\pax^2 u$ and integrating, we have
\begin{equation}\label{8}
\frac{1}{2}\frac{d}{dt}\|u\|^2_{\dot{H}^1}=I_3+I_4,
\end{equation}
with
\begin{align}
I_3&=-\int_\Omega \left(\frac{2u}{u^2+M(1-u)^2}-\frac{u^2(2u-2M(1-u))}{\left(u^2+M(1-u)^2\right)^2}\right)\pax u\pax^2udx\label{9}\\
I_4&=-\nu\|u\|_{\dot{H}^{1+\frac{\alpha}{2}}}^2.\label{10}
\end{align}
We have the bounds
\begin{equation}\label{bds}
\begin{aligned}
\left\|\left(\frac{2u}{u^2+M(1-u)^2}-\frac{u^2(2u-2M(1-u))}{\left(u^2+M(1-u)^2\right)^2}\right)\right\|_{L^2} & \leq c(u_0,M)\|u\|_{L^2},\\
\left\|\left(\frac{2u}{u^2+M(1-u)^2}-\frac{u^2(2u-2M(1-u))}{\left(u^2+M(1-u)^2\right)^2}\right)\right\|_{H^{1}} & \leq c(u_0,M)\|u\|_{H^1},
\end{aligned}
\end{equation}
so, using interpolation, we have
$$
\left\|\left(\frac{2u}{u^2+M(1-u)^2}-\frac{u^2(2u-2M(1-u))}{\left(u^2+M(1-u)^2\right)^2}\right)\right\|_{H^{1-\alpha/2}}\leq c(u_0,M)\|u\|_{H^{1-\alpha/2}}.
$$
We use the duality pairing $H^{1-\alpha/2}-H^{\alpha/2-1}$ together with Moser's inequality and embeddings to obtain
\begin{align*}
I_3&\leq \left\|\left(\frac{2u}{u^2+M(1-u)^2}-\frac{u^2(2u-2M(1-u))}{\left(u^2+M(1-u)^2\right)^2}\right)\pax u\right\|_{H^{1-\alpha/2}}\|\pax^2u\|_{H^{\alpha/2-1}}\\
&\leq c\left\|\left(\frac{2u}{u^2+M(1-u)^2}-\frac{u^2(2u-2M(1-u))}{\left(u^2+M(1-u)^2\right)^2}\right)\right\|_{L^\infty}\|\pax u\|_{H^{1-\alpha/2}}\|u\|_{H^{1+\alpha/2}}\\
&\quad+c\left\|\left(\frac{2u}{u^2+M(1-u)^2}-\frac{u^2(2u-2M(1-u))}{\left(u^2+M(1-u)^2\right)^2}\right)\right\|_{H^{1-\alpha/2}}\|\pax u\|_{L^\infty}\|u\|_{H^{1+\alpha/2}}\\
&\leq c(u_0,M,\nu)e^{c(u_0,M,\nu)t} +\frac{\nu}{2}\|u\|_{H^{1+\alpha/2}}^2.
\end{align*}
Notice that the above estimate can be also derived using the Kato-Ponce inequality. As a consequence,
$$
\|u(t)\|_{H^1}^2+\frac{\nu}{2}\int_0^t\|u(s)\|_{H^{1+\alpha/2}}^2ds\leq c(u_0,M,\nu)e^{c(u_0,M,\nu)t}.
$$

\textbf{Step 2: Case $\mu>0$, $\beta>1$ (and $0\leq \alpha\leq 2$).} We consider the case $s=\frac{1+\beta}{2}$, the other cases ($s>\frac{1+\beta}{2}$) being analogous. Using \eqref{L^2} together with $\beta>1$, we have
$$
\|u(t)\|_{L^\infty}\leq C\|u(t)\|_{H^{\beta/2}}\leq C(\beta,\mu,M,u_0).
$$
Now we test \eqref{BL} against $\Lambda u$. Due to \eqref{5}-\eqref{7} with  \eqref{boundM}, we obtain
$$
\frac{1}{2}\frac{d}{dt}\left(\|u(t)\|_{\dot{H}^{0.5}}^2+\mu\|u(t)\|_{\dot{H}^{\frac{1+\beta}{2}}}^2\right)+ 2\nu\|u(t)\|^2_{\dot{H}^{\frac{1+\alpha}{2}}}\leq C(\beta,\mu,M,u_0)\|u\|_{\dot{H}^1}^2.
$$
Integrating, we obtain
$$
\|u(t)\|_{\dot{H}^{0.5}}^2+\mu\|u(t)\|_{\dot{H}^{\frac{1+\beta}{2}}}^2+\int_0^t 2\nu\|u(s)\|^2_{\dot{H}^{\frac{1+\alpha}{2}}}ds\leq C \|u_0\|_{H^{\frac{1+\beta}{2}}}^2e^{C(\beta,\mu,M,u_0)t}.
$$
Testing against $-\pax^2 u$ we can perform energy estimates as in Step 1. We obtain 
$$
\frac{1}{2}\frac{d}{dt}\left(\|u(t)\|_{\dot{H}^{1}}^2+\mu\|u(t)\|_{\dot{H}^{1+\frac{\beta}{2}}}^2\right)+\nu\|u(t)\|^2_{\dot{H}^{1+\frac{\alpha}{2}}}\leq C(\beta,\mu,M,u_0)\|\pax u(t)\|^3_{L^3}.
$$
We use the interpolation inequality
$$
\|f\|_{L^4}\leq C\|f\|^{0.5}_{H^{\beta/2}}\|f\|^{0.5}_{L^{2}},
$$
so
\begin{align*}
C\|\pax u(t)\|^3_{L^3} & \leq C\|\pax u\|^2_{L^4}\|\pax u\|_{L^2}\\
&\leq C\|u\|_{H^{1+\beta/2}}\|u\|_{H^{1}}^2\\
&\leq \frac{\mu}{4}\|u\|_{H^{1+\beta/2}}^2+C(\mu,u_0)e^{C(\beta,\mu,M,u_0)t}.
\end{align*}
Now we can use Gronwall's inequality to obtain
$$
\|u(t)\|_{\dot{H}^{1}}^2+\frac{\mu}{2}\|u(t)\|_{\dot{H}^{1+\frac{\beta}{2}}}^2+ 2\nu\int_0^t\|u(s)\|^2_{\dot{H}^{1+\frac{\alpha}{2}}}ds\leq C(\beta,\mu,M,u_0)e^{C(\beta,\mu,M,u_0)t}.
$$
\textbf{Step 3: Case $\mu>0$, $0<\beta<1$ and $\nu >0$,  $1<\alpha \le 2$}. As before, we test \eqref{BL} against $\Lambda u$. Using \eqref{boundM}, we obtain
$$
\frac{1}{2}\frac{d}{dt}\left(\|u(t)\|_{\dot{H}^{0.5}}^2+\mu\|u(t)\|_{\dot{H}^{\frac{1+\beta}{2}}}^2\right)+\nu\|u(t)\|^2_{\dot{H}^{\frac{1+\alpha}{2}}}\leq C(M)\|u\|_{\dot{H}^1}^{2}.
$$
Now we use the interpolation 
$$
\|u(t)\|_{\dot{H}^{1}}\leq C\|u_0\|_{H^{\beta/2}}^{\frac{\alpha-1}{1+\alpha-\beta}}\|u(t)\|_{\dot{H}^{\frac{1+\alpha}{2}}}^{\frac{2-\beta}{1+\alpha-\beta}}.
$$
For $\alpha>1$ we have that
$$
\frac{2-\beta}{1+\alpha-\beta}<1,
$$
hence
$$
\|u(t)\|_{\dot{H}^{0.5}}^2+\mu\|u(t)\|_{\dot{H}^{\frac{1+\beta}{2}}}^2+2 \nu\int_0^t\|u(s)\|^2_{\dot{H}^{\frac{1+\alpha}{2}}}ds\leq e^{C(M)t}\|u_0\|^2_{\dot{H}^{\frac{1+\beta}{2}}}.
$$
Now, we test against $-\pax^2 u$. We can conclude as in Step 1. We obtain
$$
\|u(t)\|_{\dot{H}^{1}}^2+\mu\|u(t)\|_{\dot{H}^{1+\frac{\beta}{2}}}^2+ \nu \int_0^t\|u(s)\|^2_{\dot{H}^{1+\frac{\alpha}{2}}}ds\leq e^{C(M)t}\|u_0\|^2_{\dot{H}^{1+\frac{\beta}{2}}}.
$$

%\end{proof}

\section{Proof of Theorem \ref{teo3}: Global solution for $\max\{\alpha,\beta\}=1$}
%\begin{proof}[Proof of Theorem \ref{teo3}]

\textbf{Step 1: Case $\nu>0$,  $\alpha=1$, $\mu=0$.} 

We do the case $s=1$,  the other cases being analogous. Testing \eqref{BL} against $\Lambda u$ and using the self-adjointness, we have equations \eqref{4},\eqref{5}, \eqref{6} and \eqref{7}. Notice that, under the hypothesis
$$
\|u_0\|_{L^\infty}\leq \gamma.
$$
Since $\|u(t)\|_{L^\infty} \le \|u_0\|_{L^\infty}$, we have that
\begin{multline*}
\left\|\frac{2u}{u^2+M(1-u)^2}-\frac{u^2(2u-2M(1-u))}{\left(u^2+M(1-u)^2\right)^2}\right\|_{L^\infty}\\
\leq \frac{2\gamma(M+1)}{M}+\frac{2\gamma^2(\gamma+M)(M+1)^2}{M^2}.
\end{multline*}
Using that
$$
\frac{2\gamma(M+1)}{M}+\frac{2\gamma^2(\gamma+M)(M+1)^2}{M^2}<\nu,
$$
we conclude
\begin{equation}\label{11}
\|u(t)\|_{\dot H^{0.5}}+\delta\int_0^t\|u(s)\|^2_{\dot{H}^1}ds\leq \|u_0\|_{\dot H^{0.5}},
\end{equation}
for a small enough $0<\delta$. Notice that this $\delta$ only depends on $M,u_0$ and $\nu$. 

Next, testing \eqref{BL} against $-\pax^2u$ and integrating by parts, we get \eqref{8}, \eqref{9} and \eqref{10}. If we integrate by parts in \eqref{9}, we get

\begin{equation}\label{eq:i3}
I_3\leq c(u_0,M) \| u\|_{H^1}  \|\pax u\|_{L^4}^2\leq c(u_0,M,\nu)\|u\|_{\dot{H}^1}^{4}+\frac{\nu}{2}\|u\|_{\dot {H}^{1.5}}^2 +c.
\end{equation}
The first inequality above uses also \eqref{bds} and the second the interpolation
\[
\|f\|_{L^4}\leq C\|f\|^{0.5}_{H^{0.5}}\|f\|^{0.5}_{L^{2}}\]

 Using \eqref{eq:i3} in \eqref{8}, we obtain
$$
\frac{d}{dt}\|u (t) \|^2_{\dot H^1}+\nu \|u (t) \|_{\dot{H}^{1.5}}^2 \leq c(u_0,M,\nu)\|u\|_{\dot{H}^1}^{4} + c,
$$
and, due to Gronwall's inequality together wit \eqref{11}, we obtain
$$
\|u\|^2_{H^1}+\nu\int_0^t\|u(s)\|_{{H}^{1.5}}^2ds\leq c(u_0,M,\nu)e^{c(u_0,M,\nu)t}.
$$
This ends the proof of case (ii) of our thesis.

\textbf{Step 2: Case $\nu>0, \alpha=1, \mu>0, \beta<1$.} In this case we can not use the pointwise methods, so we cannot get immediately $\|u(t)\|_{L^\infty} \le \|u_0\|_{L^\infty}$. Estimate \eqref{L^2} implies a global bound in $H^{\beta/2}$, but this bound is too weak to give us a pointwise estimate for $u$. However, as
$$
\|u_0\|_{L^\infty}\leq C_S\|u_0\|_{H^{\frac{1+\beta}{2}}}\leq \gamma,
$$
testing \eqref{BL} against $\Lambda u$ and using the definition of $\gamma$ and $\gamma^*$, we have, as in step 1,
$$
\frac{1}{2}\frac{d}{dt}\left(\|u(t)\|_{\dot{H}^{0.5}}^2+\mu\|u(t)\|^2_{\dot{H}^{\frac{1+\beta}{2}}}\right)+\delta \|u(t)\|_{\dot{H}^{1}}^2\leq 0.
$$
As a consequence, we obtain the global bound
$$
\|u(t)\|_{\dot{H}^{0.5}}^2+\mu\|u(t)\|^2_{\dot{H}^{\frac{1+\beta}{2}}}+ \delta \int_0^t\|u(s)\|_{\dot{H}^{1}}^2ds\leq \|u_0\|_{\dot{H}^{0.5}}^2+\mu\|u_0\|^2_{\dot{H}^{\frac{1+\beta}{2}}}.
$$
Now we test against $-\pax^2u$ and we conclude as in Step 1. Case (iii) is proved.

\textbf{Step 3: Case $\nu\geq0, \mu>0$, $\beta=1$.} In this case, \eqref{L^2} implies a global bound in $H^{0.5}$. Then, testing \eqref{BL} against $\Lambda u$ and using \eqref{boundM}, we have
$$
\frac{1}{2}\frac{d}{dt}\left(\|u(t)\|_{\dot{H}^{0.5}}^2+\mu\|u(t)\|^2_{\dot{H}^{1}}\right)+\nu\|u(t)\|_{\dot{H}^{\frac{1+\alpha}{2}}}^2\leq C(M)\|u(t)\|^2_{\dot{H}^{1}}.
$$
As a consequence, we can apply Gronwall's inequality to get a global bound
$$
\|u(t)\|_{\dot{H}^{0.5}}^2+\mu\|u(t)\|^2_{\dot{H}^{1}}+\nu\int_0^t\|u(t)\|_{\dot{H}^{\frac{1+\alpha}{2}}}^2\leq C(\mu,M)e^{C(M)t}\|u_0\|^2_{H^{1}}.
$$
Now we test against $-\pax^2u$ and we conclude as in Step 1. Case (i) is proved.
%\end{proof}

\section{Proof of Theorem \ref{teo4}: Global solution if $0<\alpha<1$ and $\mu=0$}
%\begin{proof}[Proof of Theorem \ref{teo4}]
We consider the case $s=2$, the other cases being similar. Let us write $\tilde {x}_t$ for the point where $\pax u$ reaches its maximum, \emph{i.e.}
$$
\pax u(\tilde {x}_t,t)=\max_{x} (\pax u(x,t) )=M_1(t)>0.
$$
With a similar argument as in the proof of Proposition \ref{teo1b} (see also \cite{cor2}), we have
$$
\frac{d}{dt}M_1(t)=-\pax\left(\frac{2u}{u^2+M(1-u)^2}-\frac{u^2(2u-2M(1-u))}{\left(u^2+M(1-u)^2\right)^2}\right)\pax u\bigg{|}_{x=\tilde{x}_t}-\nu\Lambda^{\alpha}\pax u\bigg{|}_{x=\tilde{x}_t}.
$$
Due to the kernel expression \eqref{1} and \eqref{normalizingconstant}, we have
$$
-\nu\Lambda^{\alpha}\pax u(\tilde{x}_t)\leq -\nu \mathcal{C}_{\alpha,1}\pax u(\tilde{x}_t).
$$

Due to the smallness choice \eqref{12} and $\|u(t)\|_{L^\infty} \le \|u_0\|_{L^\infty}$, we have
\begin{align*}
A& =\left|\pax\left(\frac{2u}{u^2+M(1-u)^2}-\frac{u^2(2u-2M(1-u))}{\left(u^2+M(1-u)^2\right)^2}\right)\right|\\
& \leq \Sigma(\gamma^*)\\
& \leq \nu\mathcal{C}_{\alpha,1}.
\end{align*}
Consequently,
\[
\frac{d}{dt}M_1(t) \leq \left[-\pax\left(\frac{2u}{u^2+M(1-u)^2}-\frac{u^2(2u-2M(1-u))}{\left(u^2+M(1-u)^2\right)^2}\right)-\nu \mathcal{C}_{\alpha,1}\right]\pax u  \leq 0.
\]
Let us write $ \underline{\tilde x}_t$ for the point where $\pax u$ reaches its minimum, \emph{i.e.}
$$
\pax u( \underline{\tilde x}_t,t)=\min_{x}\pax u(x,t)=m_1(t).
$$

As before, due to the kernel expression \eqref{1} and \eqref{normalizingconstant}, we have
$$
-\nu\Lambda^{\alpha}\pax u(\underline{\tilde x}_t)\geq -\nu \mathcal{C}_{\alpha,1}\pax u( \underline{\tilde x}_t).
$$
Consequently, with the same argument, we have (for negative $\pax u(\underline {\tilde {x}}_t,t)$)
\begin{align*}
\frac{d}{dt}m_1(t)&=-\pax\left(\frac{2u}{u^2+M(1-u)^2}-\frac{u^2(2u-2M(1-u))}{\left(u^2+M(1-u)^2\right)^2}\right)\pax u\bigg{|}_{x=\underline{\tilde x}}-\nu\Lambda^{\alpha}\pax u(\underline{\tilde x}_t)\\
&\geq -\pax u(\underline{\tilde x}_t)\left[\pax\left(\frac{2u}{u^2+M(1-u)^2}-\frac{u^2(2u-2M(1-u))}{\left(u^2+M(1-u)^2\right)^2}\right)+\nu \mathcal{C}_{\alpha,1}\right]\\
&\geq0.
\end{align*}
Hence
\begin{equation}\label{16}
\|u(t)\|_{W^{1,\infty}(\TT)}\leq \|u_0\|_{W^{1,\infty}(\TT)}.
\end{equation}
We test equation \eqref{BL} against $\pax^4u$ and integrate by parts. We have
\begin{equation}\label{13}
\frac{1}{2}\frac{d}{dt}\|u\|^2_{\dot{H}^2}=I_5+I_6,
\end{equation}
with
\begin{align}
I_5&=-\int_\TT \pax^2\left[\left(\frac{2u}{u^2+M(1-u)^2}-\frac{u^2(2u-2M(1-u))}{\left(u^2+M(1-u)^2\right)^2}\right)\pax u\right]\pax^2udx\label{14}\\
I_6&=-\nu\|u\|_{\dot{H}^{2+\frac{\alpha}{2}}}^2.\label{15}
\end{align}
Using \eqref{16}, we have
$$
I_5\leq c(u_0,M)\|u\|_{\dot{H}^2}^2,
$$
thus, applying Gronwall's inequality, we have
$$
\|u(t)\|_{H^2}^2+2\nu\int_0^t\|u(s)\|_{\dot{H}^{2+\frac{\alpha}{2}}}^2ds\leq \|u_0\|_{H^2}^2e^{c(u_0,M)t}.
$$
%\end{proof}
\section{Proof of Proposition \ref{teo5}: Finite time singularities}\label{sec:8}
%\begin{proof}[Proof of Proposition \ref{teo5}]
First, we study the case $\nu=0$. Let us take $u_0$ such that
$$
u_0\geq0,\,\,u_0(0)=0,
$$
and
\begin{equation}\label{19}
J_0=\int_{-1}^0\frac{u_0(x)}{|x|^{\delta}}dx<\infty.
\end{equation}

We argue by contradiction: assume that we have $u(t)$ a global $C^2$ solution corresponding to $u_0$. Recalling the expression $a(x)$ given in \eqref{eqa} we define the characteristic curve $y(t)$, solution to
\begin{equation}\label{charcurve}
y'(t)=a(u(y(t),t)),\,\,y(0)=0.
\end{equation}
and
$$
v(x,t)=u(x+y(t),t).
$$
Notice that, due to \eqref{BL},
$$
\frac{d}{dt}u(y(t),t)=0.
$$
Thus
$$
u(y(t),t)=u_0(y(0))=0=v(0,t).
$$
Now we have
\begin{align}
\pat v(x)&=\pat u(x+y(t),t)+\pax v(x)a(v(0))\nonumber\\
&=\pax v(x)(a(v(0))-a(v(x)))\nonumber\\
&=-\pax\left(f(v(x))\right).\label{eqv},
\end{align}
with $f$ given by \eqref{eqf}.
For a fixed $0<\delta<1$, we define
$$
\phi(x)=|x|^{-\delta}\textbf{1}_{[-1,0]}
$$
and
\begin{equation}\label{eqJ}
J(t)=\int_\RR (v(x,t)-v(0,t))\phi(x)dx.
\end{equation}
Notice that if $J(t)$ blows up, due to the inequality
$$
J(t)= \int_{-1}^0\frac{v(x,t)-v(0,t)}{|x|^{\delta}}dx\leq \|v(t)\|_{C^{\delta}}=\|u(t)\|_{C^{\delta}},
$$
the solution forms a singularity.

Testing equation \eqref{eqv} against $\phi(x)$, we have
\begin{align*}
\frac{d}{dt}\int_{-1}^0 v(x)\phi(x)dx&=-\int_{-1}^0\pax\left(f(v(x))\right)\phi(x)dx\\
&=\int_{-1}^0f(v(x))\pax \phi(x)dx+f(v(-1))\phi(-1)\\
&=\delta \int_{-1}^0f(v(x))\frac{1}{|x|^{1+\delta}}dx+f(v(-1))\\
&=\delta \int_{-1}^0f(v(x))\phi(x)^2\frac{|x|^{2\delta}}{|x|^{1+\delta}}dx+f(v(-1))\\
&\geq \frac{\delta}{1+M} \int_{-1}^0(v(x)\phi(x))^2dx\\
&\geq \frac{\delta}{1+M} \left(\int_{-1}^0v(x)\phi(x)dx\right)^2,
\end{align*}
where we have used the positivity of the solution and Jensen's inequality.

We obtain the ODI
$$
\frac{d}{dt}J(t)\geq \frac{\delta}{1+M} J(t)^2,
$$
and the blow up of $J(t)$ in finite time $T^*=T^*(\delta,u_0,M)$.

We have proved the case $\nu=0$, but the proof of the case $0<\nu$ and $\alpha=0$ is analogous and can be easily adapted from here. 

\section{Numerical simulations}\label{sec9}
In this section we present our numerical simulations suggesting a finite time blow up in the case $\nu>0$, $0<\alpha<1$. To approximate the solution, we discretize using the Fast Fourier Transform with $N=2^{14}$ spatial nodes. The main advantage of this numerical scheme is that the differential operators are multipliers on the Fourier side. Once the spatial part has been discretized, we use a Runge-Kutta scheme to advance in the time variable. 

\begin{figure}[t!]
    \centering
    \includegraphics[scale=0.35]{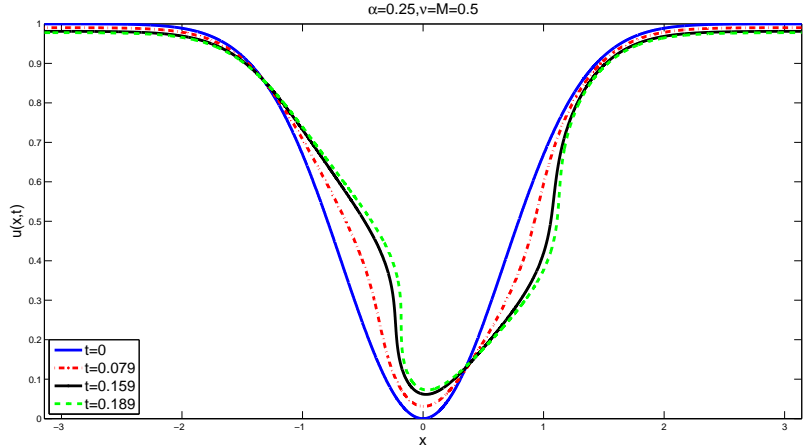}
\caption{Evolution in the case $\alpha=0.25$, $\nu=M=0.5$, $\mu=0$.}
\label{alpha025}
\end{figure}

In our simulations, we consider the initial data
\begin{equation}\label{20}
u_0(x)=1-e^{-x^2}\left(1-\frac{x^2}{\pi^2}\right),
\end{equation}
and values $M=\nu=0.5.$ and $\mu=0$. Then, we approximate the solution for \eqref{BL} for different values of the parameter $0<\alpha\leq 1$. In particular, we study four cases:
\begin{enumerate}
\item $\alpha=0.25$,
\item $\alpha=0.5$,
\item $\alpha=0.75$,
\item $\alpha=1$.
\end{enumerate} 

\begin{figure}[t!]
    \centering
    \includegraphics[scale=0.35]{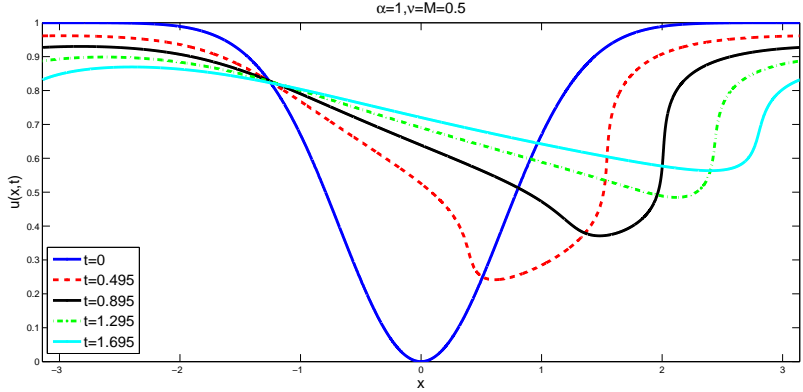}
\caption{Evolution in the case $\alpha=1$, $\nu=M=0.5$, $\mu=0$.}
\label{alpha1}
\end{figure}

Qualitatively, the evolution in the cases $\alpha=0.25$, $\alpha=0.5$ and $\alpha=0.75$ looks alike. The numerics suggests that a blow up of the derivative appears in finite time in the cases $\alpha=0.25$, $\alpha=0.5$ and $\alpha=0.75$ (see Figures \ref{alpha025} and \ref{deriv}):
$$
\limsup_{t\rightarrow T_{max}}\|\pax u(t)\|_{L^\infty}=\infty.
$$
However, in the case $\alpha=1$, the solution seems to exists globally (see Figure \ref{alpha1}).

\begin{figure}[t!]
    \centering
    \includegraphics[scale=0.35]{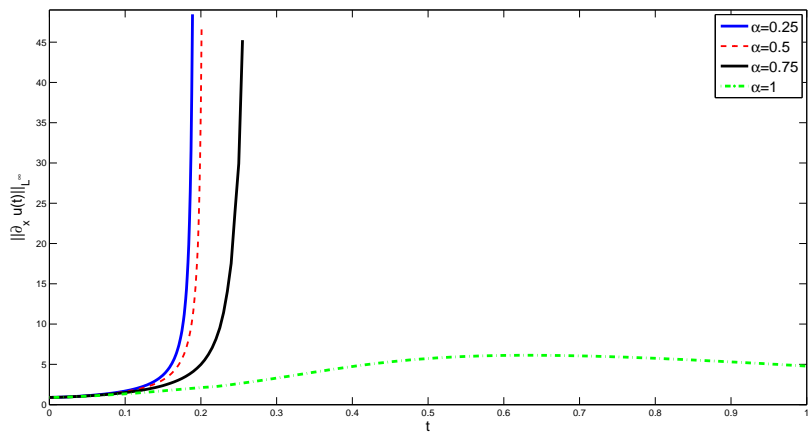}
\caption{Evolution of $\|\pax u(t)\|_{L^\infty}$, $\nu=M=0.5$, $\mu=0$.}
\label{deriv}
\end{figure}

In Figure \ref{deriv}, we plot the evolution of $\|\pax u(t)\|_{L^\infty}$. This figure shows that in the critical case $\alpha=1$, the derivative may grow for short time, even if it remains globally bounded for large times. 

Next, we add the term $\mu\Lambda^\beta \pat u$. We consider the same initial data \eqref{20} and values $M=\nu=\mu=0.5$. Then, we approximate the solution for \eqref{BL} for different values of the parameters $0<\alpha,\beta< 1$. In particular, we study three cases:
\begin{enumerate}
\item $\alpha=\beta=0.25$,
\item $\alpha=\beta=0.5$,
\item $\alpha=\beta=0.75$.
\end{enumerate} 

\begin{figure}[t!]
    \centering
    \includegraphics[scale=0.35]{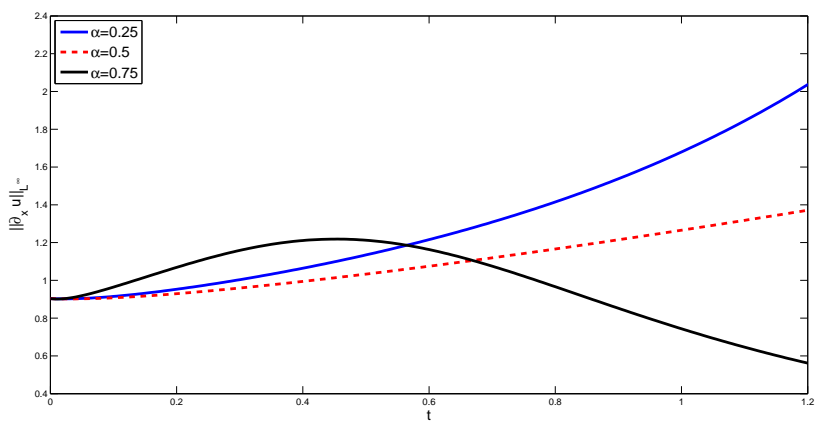}
\caption{Evolution of $\|\pax u(t)\|_{L^\infty}$, $\nu=M=\mu=0.5$.}
\label{deriv2}
\end{figure}

Interestingly, we observe (see Figure \ref{deriv2}) that even for small values of $\alpha$ and $\beta$, in the case with $\mu>0$, there is not evidence of finite time singularities.

% If in two-column mode, this environment will change to single-column format so that long equations can be displayed. 
% Use only when necessary.
%\begin{widetext}
%$$\mbox{put long equation here}$$
%\end{widetext}

% Figures should be put into the text as floats. 
% Use the graphics or graphicx packages (distributed with LaTeX2e).
% See the LaTeX Graphics Companion by Michel Goosens, Sebastian Rahtz, and Frank Mittelbach for examples. 
%
% Here is an example of the general form of a figure:
% Fill in the caption in the braces of the \caption{} command. 
% Put the label that you will use with \ref{} command in the braces of the \label{} command.
%
% \begin{figure}
% \includegraphics{}%
% \caption{\label{}}%
% \end{figure}

% Tables may be be put in the text as floats.
% Here is an example of the general form of a table:
% Fill in the caption in the braces of the \caption{} command. Put the label
% that you will use with \ref{} command in the braces of the \label{} command.
% Insert the column specifiers (l, r, c, d, etc.) in the empty braces of the
% \begin{tabular}{} command.
%
% \begin{table}
% \caption{\label{} }
% \begin{tabular}{}
% \end{tabular}
% \end{table}

% If you have acknowledgments, this puts in the proper section head.
%\begin{acknowledgments}
% Put your acknowledgments here.
%\end{acknowledgments}

\bibliographystyle{abbrv}
%\bibliography{bibliografia}

\end{document}